\definecolor{verylight}{gray}{0.97}
\definecolor{light}{gray}{0.9}
\definecolor{medium}{gray}{0.85}
\definecolor{dark}{gray}{0.6}
\def\NZQ{\Bbb}               
\def\NN{{\NZQ N}}
\def\ZZ{{\NZQ Z}}
\def\FF{{\NZQ F}}
\def\frk{\frak}               
\def\mm{{\frk m}}
\def\Phi{{\frk n}}
\def\Phi{{\frk N}}
\def\MP{{\mathcal P}}
\def\MQ{{\mathcal Q}}
\def\opn#1#2{\def#1{\operatorname{#2}}} 
\opn\chara{char} \opn\length{\ell} \opn\pd{pd} \opn\rk{rk}
\opn\projdim{proj\,dim} \opn\injdim{inj\,dim} \opn\rank{rank}
\opn\depth{depth} \opn\grade{grade} \opn\height{height}
\opn\embdim{emb\,dim} \opn\codim{codim}
\opn\Tr{Tr} \opn\bigrank{big\,rank}
\opn\superheight{superheight}\opn\lcm{lcm}
\opn\trdeg{tr\,deg}
\opn\reg{reg} \opn\lreg{lreg} \opn\ini{in} \opn\lpd{lpd}
\opn\size{size}\opn\bigsize{bigsize}
\opn\cosize{cosize}\opn\bigcosize{bigcosize}
\opn\sdepth{sdepth}\opn\sreg{sreg}
\opn\link{link}\opn\fdepth{fdepth}
\opn\div{div} \opn\Div{Div} \opn\cl{cl} \opn\Cl{Cl}
\opn\Spec{Spec} \opn\Supp{Supp} \opn\supp{supp} \opn\Sing{Sing}
\opn\Ass{Ass} \opn\Min{Min}\opn\Mon{Mon} \opn\dstab{dstab} \opn\astab{astab}
\opn\Syz{Syz}
\opn\Ann{Ann} \opn\Rad{Rad} \opn\Soc{Soc}
\opn\Im{Im} \opn\Ker{Ker} \opn\Coker{Coker} \opn\Am{Am}
\opn\Hom{Hom} \opn\Tor{Tor} \opn\Ext{Ext} \opn\End{End}
\opn\Aut{Aut} \opn\id{id}
\opn\nat{nat}
\opn\pff{pf}
\opn\Pf{Pf} \opn\GL{GL} \opn\SL{SL} \opn\mod{mod} \opn\ord{ord}
\opn\Gin{Gin} \opn\Hilb{Hilb}\opn\sort{sort}\opn\width{width}
\opn\aff{aff} \opn\con{conv} \opn\relint{relint} \opn\st{st}
\opn\lk{lk} \opn\cn{cn} \opn\core{core} \opn\vol{vol}
\opn\link{link} \opn\star{star}\opn\lex{lex}
\opn\gr{gr}
\def\pot#1#2{#1[\kern-0.28ex[#2]\kern-0.28ex]}
\opn\dirlim{\underrightarrow{\lim}}
\opn\inivlim{\underleftarrow{\lim}}
\let\union=\cup
\let\tensor=\otimes
\let\iso=\cong
\let\Dirsum=\bigoplus
\let\to=\rightarrow
\def\Implies{\ifmmode\Longrightarrow \else
        \unskip${}\Longrightarrow{}$\ignorespaces\fi}
\def\implies{\ifmmode\Rightarrow \else
        \unskip${}\Rightarrow{}$\ignorespaces\fi}
\def\iff{\ifmmode\Longleftrightarrow \else
        \unskip${}\Longleftrightarrow{}$\ignorespaces\fi}
\newtheorem{Theorem}{Theorem}[section]
 \newtheorem{Lemma}[Theorem]{Lemma}
 \newtheorem{Corollary}[Theorem]{Corollary}
 \newtheorem{Proposition}[Theorem]{Proposition}
\let\epsilon\varepsilon
\let\kappa=\varkappa
\def\qed{\ifhmode\textqed\fi
      \ifmmode\ifinner\quad\qedsymbol\else\dispqed\fi\fi}
\def\textqed{\unskip\nobreak\penalty50
       \hskip2em\hbox{}\nobreak\hfil\qedsymbol
       \parfillskip=0pt \finalhyphendemerits=0}
\def\dispqed{\rlap{\qquad\qedsymbol}}
\opn\dis{dis}
\def\pnt{{\raise0.5mm\hbox{\large\bf.}}}
\opn\Lex{Lex}
\begin{document}
 \title {Linearly related polyominoes}

 \author {Viviana Ene, J\"urgen Herzog, Takayuki Hibi}

\address{Viviana Ene, Faculty of Mathematics and Computer Science, Ovidius University, Bd.\ Mamaia 124,
 900527 Constanta, Romania, and
 \newline
 \indent Simion Stoilow Institute of Mathematics of the Romanian Academy, Research group of the project  ID-PCE-2011-1023,
 P.O.Box 1-764, Bucharest 014700, Romania} \email{vivian@univ-ovidius.ro}

\address{J\"urgen Herzog, Fachbereich Mathematik, Universit\"at Duisburg-Essen, Campus Essen, 45117
Essen, Germany} \email{juergen.herzog@uni-essen.de}

\address{Takayuki Hibi, Department of Pure and Applied Mathematics, Graduate School of Information Science and Technology,
Osaka University, Toyonaka, Osaka 560-0043, Japan}
\email{hibi@math.sci.osaka-u.ac.jp}

\thanks{The first author was supported by the grant UEFISCDI,  PN-II-ID-PCE- 2011-3-1023.}

 \begin{abstract}
We classify all  convex polyomino ideals which are  linearly related or have a linear resolution. Convex stack polyominoes  whose ideals are extremal Gorenstein are also classified. In addition, we characterize, in combinatorial terms, the distributive lattices  whose join-meet ideals  are extremal Gorenstein or have a linear resolution. 
 \end{abstract}

\thanks{}
\subjclass[2010]{13C05, 05E40, 13P10}
\keywords{Binomial ideals,  Linear syzygies, Polyominoes}

 \maketitle

\section*{Introduction}
The ideal of inner minors of a polyomino, a so-called polyomino ideal,  is  generated by  certain subsets of $2$-minors of an $m\times n$-matrix $X$ of
indeterminates. Such ideals have first been studied by Qureshi in \cite{Q}. They include the two-sided ladder  determinantal ideals of $2$-minors  which may also
be viewed as the join-meet ideal of a planar distributive lattice. It is a challenging problem to understand the graded free resolution of such ideals. In
\cite{ERQ}, Ene, Rauf and Qureshi succeeded to compute the regularity of such joint-meet ideals. Sharpe \cite{S1, S2} showed that the  ideal $I_2(X)$  of all $2$-minors  of $X$ is
linearly related, which means that   $I_2(X)$  has linear relations. Moreover, he described these relations explicitly and conjectured that also the ideals of
$t$-minors $I_t(X)$  are generated by a certain type of linear relations. This  conjecture was then proved by Kurano \cite{K}. In the case that the base field over
which $I_t(X)$ is defined contains the rational numbers, Lascoux \cite{L} gives the explicit free resolution of all ideals of $t$-minors. Unfortunately, the
resolution of $I_t(X)$ in general  may depend on the characteristic of the base field.  Indeed, Hashimoto \cite{H} showed that for  $2 \leq t \leq  \min(m, n)-3$,
the second  Betti number $\beta_2$  of $I_t(X)$ depends on the characteristic. On the other hand, by using  squarefree divisor complexes \cite{BH} as introduced by Bruns and the second author of this paper, it follows from
\cite[Theorem 1.3]{BH}  that $\beta_2$ for $t=2$ is independent of the characteristic.

In  this paper we  use as a main tool  squarefree divisor complexes to study the first syzygy module  of a polyomino ideal. In particular, we classify all convex polyominoes which are linearly related; see Theorem~\ref{main}. This is the main result of this paper. In the first section we recall the concept of polyomino ideals and show that the polyomino ideal of a convex polyomino has a quadratic Gr\"obner basis. The second section of the paper is devoted to state and to prove Theorem~\ref{main}. As mentioned before,  the proof heavily depends on the theory of squarefree divisor complexes which allow to compute the multi-graded Betti numbers of a toric ideal. To apply this theory, one observes that the polyomino ideal of a convex polyomino may be naturally identified with a toric ideal.   The crucial conclusion deduced from this observation, formulated in Corollary~\ref{inducedsubpolyomino}, is then that the Betti numbers of a polyomino ideal is bounded below by the Betti numbers of the polyomino ideal of any induced subpolyomino. Corollary~\ref{inducedsubpolyomino} allows to reduce the study  of the relation of polyomino ideals  to that of a finite number of polyominoes with a small number of cells which all can be analyzed by the use of a computer algebra system.

In the last section, we classify all convex polyominoes whose polyomino ideal  has a linear resolution (Theorem~\ref{linear}) and all convex stack polyominoes whose polyomino ideal is extremal Gorenstein (Theorem~\ref{stack}). Since polyomino ideals overlap with join-meet ideals, it is of interest which of the ideals among the join-meet ideals  have a linear resolution or are extremal Gorenstein. The answers are given in Theorem~\ref{hibione} and Theorem~\ref{hibitwo}. It turns out that the classifications for both classes of ideals almost lead  to the same result.

\section{Polyominoes}

In this section we consider  polyomino ideals. This class of ideals of $2$-minors was introduced   by Qureshi \cite{Q}.
To this end, we consider on $\NN^2$ the natural partial order defined as follows: $(i,j) \leq (k,l)$ if and only if $i \leq k$ and $j \leq l$.
The set $\NN^2$ together with this partial order is a distributive lattice.

If $a,b \in \NN^2$ with $a \leq b$,
then the set $[a,b]= \{ c \in \NN^2|\; a \leq c \leq b\}$ is an interval of $\NN^2$. The interval $C=[a,b]$ with $b=a+(1,1)$ is called a {\em cell}  of $\NN^2$.
The elements of $C$ are called the {\em vertices} of $C$ and  $a$ is called the {\em left lower corner} of $C.$ The {\em egdes} of the cell $C$ are the sets $\{a,
(a+(1,0)\}, \{a,a+(0,1)\},  \{(a+(1,0),  a+(1,1)\}$ and   $\{(a+(0,1),  a+(1,1)\}$.

Let $\MP$ be a finite collection of cells and $C,D\in \MP$. 
Then $C$ and $D$ are {\em connected}, if there is a sequence of
 cells of $\MP$ given by $C= C_1, \ldots, C_m =D$ such that $C_i \cap C_{i+1}$ is an edge of $C_i$ for $i=1, \ldots, m-1$. If, in addition, $C_i \neq C_j$ for all
 $i \neq j$, then $\mathcal{C}$ is called a {\em path} (connecting $C$ and $D$). The collection of cells $\MP$ is called a {\em polyomino} if any two cells of
 $\MP$ are connected; see Figure~\ref{polyomino}. The set of vertices of $\MP$, denoted $V(\MP)$, is the union of the vertices of all cells belonging to $\MP$. Two polyominoes are called {\em isomorphic}  if they are mapped to each other by a composition of translations, reflections and rotations.

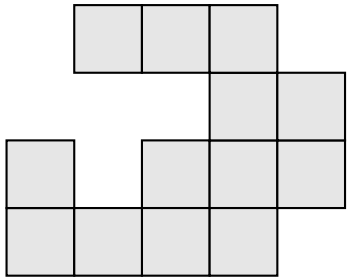
\begin{figure}[hbt]
\begin{center}
\psset{unit=0.9cm}
\begin{pspicture}(4.5,-1)(4.5,3.5)
{
\pspolygon[style=fyp,fillcolor=light](4,0)(4,1)(5,1)(5,0)
\pspolygon[style=fyp,fillcolor=light](5,0)(5,1)(6,1)(6,0)
\pspolygon[style=fyp,fillcolor=light](3,2)(3,3)(4,3)(4,2)
\pspolygon[style=fyp,fillcolor=light](5,1)(5,2)(6,2)(6,1)
\pspolygon[style=fyp,fillcolor=light](4,2)(4,3)(5,3)(5,2)
\pspolygon[style=fyp,fillcolor=light](5,2)(5,3)(6,3)(6,2)
\pspolygon[style=fyp,fillcolor=light](6,0)(6,1)(7,1)(7,0)
\pspolygon[style=fyp,fillcolor=light](6,1)(6,2)(7,2)(7,1)
\pspolygon[style=fyp,fillcolor=light](4,-1)(4,0)(5,0)(5,-1)
\pspolygon[style=fyp,fillcolor=light](5,-1)(5,0)(6,0)(6,-1)
\pspolygon[style=fyp,fillcolor=light](2,-1)(2,0)(3,0)(3,-1)
\pspolygon[style=fyp,fillcolor=light](3,-1)(3,0)(4,0)(4,-1)
\pspolygon[style=fyp,fillcolor=light](2,-0)(2,1)(3,1)(3,0)
}
\end{pspicture}
\end{center}
\caption{A  polyomino}\label{polyomino}
\end{figure}

We call a polyomino $\MP$ {\em row convex}, if for any two cells $C,D$ of $\MP$ with left  lower corner $a=(i,j)$ and $b=(k,j)$ respectively, and such that $k>i$,  it
follows that all cells with left lower corner $(l,j)$ with $i\leq l\leq k$ belong to $\MP$.  Similarly, one defines {\em column convex} polyominoes.  The
polyomino $\MP$ is called {\em convex} if it is row and column convex.

The polyomino displayed in Figure~\ref{polyomino}  is not convex, while  Figure~\ref{convex} shows a convex polyomino. Note that a convex polyomino is not convex
in the common geometric sense.

\begin{figure}[hbt]
\begin{center}
\psset{unit=0.9cm}
\begin{pspicture}(4.5,-1)(4.5,3.5)
{
\pspolygon[style=fyp,fillcolor=light](2.8,0)(2.8,1)(3.8,1)(3.8,0)
\pspolygon[style=fyp,fillcolor=light](3.8,-1)(3.8,0)(4.8,0)(4.8,-1)
\pspolygon[style=fyp,fillcolor=light](4.8,0)(4.8,1)(5.8,1)(5.8,0)
\pspolygon[style=fyp,fillcolor=light](3.8,1)(3.8,2)(4.8,2)(4.8,1)
\pspolygon[style=fyp,fillcolor=light](3.8,0)(3.8,1)(4.8,1)(4.8,0)
}
\end{pspicture}
\end{center}
\caption{A convex polyomino}\label{convex}
\end{figure}
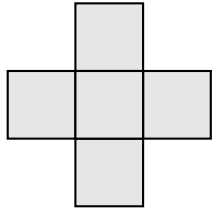

\medskip
Now let $\MP$ be any collection of cells. We may assume that the vertices of all the cells of $\MP$ belong to the interval $[(1,1),(m,n)]$.  Fix a field $K$ and let $S$ be
the polynomial ring over $K$ in the variables $x_{ij}$ with $(i,j)\in \MP$. The {\em ideal of inner
minors} $I_\MP\subset S$ of $\MP$, is the ideal generated by all $2$-minors $x_{il}x_{kj}-x_{kl}x_{ij}$ for which  $[(i,j),(k,l)]\subset V(\MP)$. Furthermore, we denote by
$K[\MP]$ the $K$-algebra $S/I_\MP$. If $\MP$ happens to be a polyomino, then $I_\MP$  will also be  called a {\em polyomino ideal}.

For example, the polyomino $\MP$ displayed in Figure~\ref{convex} may be embedded into the interval $[(1,1),(4,4)]$. Then, in these coordinates,  $I_\MP$ is
generated by the $2$-minors
\begin{eqnarray*}
&& x_{22}x_{31}-x_{32}x_{21}, x_{23}x_{31}-x_{33}x_{21},x_{24}x_{31}-x_{34}x_{21}, x_{23}x_{32}-x_{33}x_{22},\\
&& x_{24}x_{32}-x_{34}x_{22}, x_{24}x_{33}-x_{34}x_{23},
x_{13}x_{22}-x_{12}x_{23}, x_{13}x_{32}-x_{12}x_{33}, \\
&& x_{13}x_{42}-x_{12}x_{43}, x_{23}x_{42}-x_{22}x_{43},  x_{33}x_{42}-x_{32}x_{43}.
\end{eqnarray*}

The following result has been shown by Qureshi in \cite[Theorem 2.2]{Q}.

\begin{Theorem}
\label{ayesha}
Let $\MP$ be a convex polyomino. Then $K[\MP]$ is a normal Cohen--Macaulay domain.
\end{Theorem}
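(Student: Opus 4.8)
The plan is to present $K[\MP]$ as a toric ring and then apply standard toric machinery. We may assume $V(\MP)\subseteq [(1,1),(m,n)]$. Introduce indeterminates $s_1,\dots,s_m,t_1,\dots,t_n$ and let $\varphi\colon S\to K[s_1,\dots,s_m,t_1,\dots,t_n]$ be the $K$-algebra homomorphism with $\varphi(x_{ij})=s_it_j$ for $(i,j)\in V(\MP)$. Then $R:=\varphi(S)=K[s_it_j : (i,j)\in V(\MP)]$ is the edge ring of the bipartite graph $G$ whose vertices are the $s_i$ and $t_j$ that actually occur and whose edges are the pairs $\{s_i,t_j\}$ with $(i,j)\in V(\MP)$; being a subring of a polynomial ring, $R$ is a domain. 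So it suffices to prove (I)~$\ker\varphi=I_\MP$ and (II)~$R$ is normal: then $K[\MP]=S/I_\MP\cong R$, and by Hochster's theorem (normal affine semigroup rings are Cohen--Macaulay) this ring is a normal Cohen--Macaulay domain.

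For (I), the inclusion $I_\MP\subseteq\ker\varphi$ is immediate, since $\varphi(x_{il}x_{kj})=s_is_kt_jt_l=\varphi(x_{kl}x_{ij})$ for every inner $2$-minor. The reverse inclusion rests on the key combinatorial property of convex polyominoes: if $\MP$ is convex, then $V(\MP)$ has interval rows and columns, that is, for each $j$ the set $\{i : (i,j)\in V(\MP)\}$ is an interval, and for each $i$ the set $\{j : (i,j)\in V(\MP)\}$ is an interval. This is deduced from the cell-based definitions of row- and column-convexity together with the connectedness of a polyomino. From it I would draw two conclusions. First, if $(i,j),(i,l),(k,j),(k,l)\in V(\MP)$ then the entire rectangle $[(i,j),(k,l)]$ lies in $V(\MP)$; hence the inner $2$-minors of $\MP$ are exactly the binomials $x_{il}x_{kj}-x_{kl}x_{ij}$ coming from the $4$-cycles of $G$. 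Second, $G$ has no induced cycle of length $\ge 6$: in an induced cycle $s_{a_1}-t_{b_1}-s_{a_2}-t_{b_2}-\cdots-s_{a_k}-t_{b_k}-s_{a_1}$ the vertex $s_{a_{i+1}}$ is adjacent only to $t_{b_i}$ and $t_{b_{i+1}}$, so the column-interval property prevents any $b_l$ from lying between $b_i$ and $b_{i+1}$; thus consecutive $b$'s around the cycle are consecutive in value, which is impossible once $k\ge 3$. Now $\ker\varphi$ is the toric ideal of the bipartite graph $G$, and by the well-known criterion for toric ideals of graphs (Ohsugi--Hibi), the toric ideal of a bipartite graph with no induced cycle of length $\ge 6$ is generated by the quadratic binomials of its $4$-cycles. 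Therefore $\ker\varphi=I_\MP$. (Alternatively, one verifies directly via Buchberger's criterion that the inner $2$-minors form a Gr\"obner basis with squarefree initial ideal: the auxiliary $2$-minors arising in the reduction of an $S$-polynomial have their corner vertices forced into $V(\MP)$ by the interval properties, so they are again inner.)

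For (II): $R$ is the edge ring of a bipartite graph, and a bipartite graph has no odd cycle, so it satisfies the odd-cycle condition trivially; consequently $R$ is normal (equivalently, the affine semigroup generated by the monomials $s_it_j$, $(i,j)\in V(\MP)$, is saturated). Combining (I), (II), and Hochster's theorem shows that $K[\MP]\cong R$ is a normal Cohen--Macaulay domain.

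The step I expect to be the main obstacle is the combinatorial lemma in (I): carefully extracting from the cell-wise definition of convexity that $V(\MP)$ has interval rows and columns, and then obtaining the two consequences above, namely the rectangle-filling property and the absence of induced cycles of length $\ge 6$. Everything after that, i.e.\ the identification of $K[\MP]$ with the bipartite edge ring $R$, quadratic generation of $\ker\varphi$, normality, and Cohen--Macaulayness, is a matter of invoking standard results.
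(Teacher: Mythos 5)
Your proposal is correct and takes essentially the approach the paper itself indicates for Theorem~\ref{ayesha}: the paper does not reprove the statement but cites \cite[Theorem 2.2]{Q}, noting that the proof rests on viewing $I_\MP$ as the toric ideal of the bipartite graph $G_\MP$ via $x_{ij}\mapsto s_it_j$, which is exactly your route (identification with a bipartite edge ring, normality of such edge rings, and Hochster's theorem). The details you supply beyond the paper's sketch --- the interval-row/column property of $V(\MP)$ forced by convexity and connectedness, the resulting rectangle-filling property identifying the $4$-cycle binomials with the inner $2$-minors, and the absence of chordless cycles of length $\geq 6$ giving $\ker\varphi=I_\MP$ --- are sound and mirror the way the paper itself exploits convexity in Proposition~\ref{hibiohsugi} and Lemma~\ref{goodforstudents}.
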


The proof of this theorem is based on the fact that $I_\MP$ may be viewed as follows as a toric ideal: with the assumptions and notation as introduced before, we
may assume that $V(\MP)\subset [(1,1),(m,n)]$. Consider the $K$-algebra homomorphism $\varphi\: S\to T$ with $\varphi(x_{ij})=s_it_j$ for all $(i,j)\in V(\MP)$.
Here $T=K[s_1,\ldots,s_m, t_1,\ldots,t_n]$ is the polynomial ring over $K$ in the variables $s_i$ and $t_j$. Then, as observed by Qureshi, $I_\MP=\Ker \varphi$.
It follows that $K[\MP]$ may be identified with the edge ring of the bipartite graph $G_\MP$ on the vertex set $\{s_1,\ldots,s_m\}\union\{t_1,\ldots,t_n\}$ and
edges $\{s_i,t_j\}$ with $(i,j)\in V(\MP)$. With this interpretation of $K[\MP]$ in mind and by using \cite{HO}, we obtain

\begin{Proposition}
\label{hibiohsugi}
Let $\MP$ be a convex polyomino. Then $I_\MP$ has a quadratic Gr\"obner basis.
\end{Proposition}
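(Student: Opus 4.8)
The plan is to use the edge-ring description of $K[\MP]$ recalled just before the statement, together with the combinatorial characterization of Koszul bipartite graphs in \cite{HO}. Since $I_\MP$ is the toric ideal of the edge ring of the bipartite graph $G_\MP$, it is enough, by \cite{HO}, to check that $G_\MP$ satisfies the \emph{chord condition}: every cycle of $G_\MP$ of length $\ge 6$ has a chord. Indeed, \cite{HO} asserts that for a bipartite graph $G$ the toric ideal of its edge ring admits a quadratic Gr\"obner basis (for a suitable monomial order; equivalently, $K[G]$ is Koszul) precisely when $G$ has no cycle of length $\ge 6$ without a chord.

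The only geometric input I would isolate first is the following consequence of convexity: \emph{every row and every column of $V(\MP)$ is an interval}, i.e. if $(i,j),(k,j)\in V(\MP)$ with $i<k$ then $(l,j)\in V(\MP)$ for $i\le l\le k$, and symmetrically for columns. This follows from row- and column-convexity together with the connectedness of $\MP$: a row of $V(\MP)$ is obtained from the vertices of two consecutive cell-rows of $\MP$, each cell-row contributing an interval of column indices, and connectedness forces any two consecutive non-empty cell-rows of a convex polyomino to overlap horizontally, so the union is again an interval.

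Next I would translate cycles into the polyomino. A cycle of length $2k$ in $G_\MP$ amounts to pairwise distinct column indices $i_1,\dots,i_k$ and pairwise distinct row indices $j_1,\dots,j_k$ with $(i_a,j_a)\in V(\MP)$ and $(i_{a+1},j_a)\in V(\MP)$ for all $a$ (indices read mod $k$); a chord is an edge $\{s_{i_a},t_{j_b}\}$ with $b\notin\{a-1,a\}$, equivalently a vertex $(i_a,j_b)\in V(\MP)$ with $b\notin\{a-1,a\}$. Assume $k\ge 3$. Since the $i_a$ are pairwise distinct and $k\ge 3$, there is an index $a_0$ with $i_{a_0}$ strictly between $\min_a i_a$ and $\max_a i_a$. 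Deleting the position $a_0$ from the cyclic word $i_1,\dots,i_k$ leaves a path all of whose entries differ from $i_{a_0}$ and which still contains an entry smaller than $i_{a_0}$ and one larger than $i_{a_0}$; hence it has two consecutive entries $i_b,i_{b+1}$ with $i_{a_0}$ strictly between them, and necessarily $b\notin\{a_0-1,a_0\}$. As $(i_b,j_b)$ and $(i_{b+1},j_b)$ both lie in row $j_b$ of $V(\MP)$, which is an interval, we get $(i_{a_0},j_b)\in V(\MP)$; since $b\notin\{a_0-1,a_0\}$, the edge $\{s_{i_{a_0}},t_{j_b}\}$ is a chord of the cycle.

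The dictionary between cycles of the edge-ring graph and ``staircases'' in $V(\MP)$ is routine, as is the reduction to \cite{HO}. The only point I expect to need genuine care is the interval property of the rows and columns of $V(\MP)$ — more precisely, the claim that consecutive non-empty cell-rows (resp. cell-columns) of a convex polyomino overlap, which is where the connectedness of $\MP$, rather than merely row/column-convexity, has to be used.
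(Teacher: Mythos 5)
Your proposal is correct and takes essentially the same route as the paper: reduce via the Ohsugi--Hibi criterion to the statement that every cycle of length at least $6$ in $G_\MP$ has a chord, and then produce the chord from convexity by an intermediate-value argument on the pairwise distinct column indices of the cycle. Your ``delete the middle value and find a straddling consecutive pair'' step is just a variant of the paper's Lemma~\ref{goodforstudents}, and the only genuine addition is that you make explicit (and correctly justify via connectedness) the fact that rows and columns of $V(\MP)$ are intervals, which the paper uses implicitly under ``since $\MP$ is convex.''
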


\begin{proof} We use the crucial fact, proved in \cite{HO}, that the toric ideal which defines the edge ring of a bipartite graph has a quadratic Gr\"obner basis if
and only if each $2r$-cycle with $r\geq 3$ has a chord.  By what we explained before,  a $2k$-cycle, after identifying the vertices of $\MP$ with the edges of a
bipartite graph, is nothing but a sequence of vertices $a_1,\ldots, a_{2r}$ of $\MP$ with
\[
a_{2k-1}= (i_k,j_k) \quad \text{and}\quad a_{2k}= (i_{k+1}, j_k) \quad\text{for $k=1,\ldots,r$}
\]
such that  $i_{r+1}=i_1$,  $i_k\neq i_\ell$ and   $j_k\neq j_\ell$ for all $k,\ell\leq r$ and $k\neq \ell$.

A typical such sequence of  pairs of integers is the following:
\begin{eqnarray*}
&3 2 2 4 4 5 5 3 &\\
&1 1 3 3 2 2 4 4&
\end{eqnarray*}
Here the first row is the sequence of the first component and the second row the sequence of the second component  of the  vertices $a_i$. This pair of sequences
represents an $8$-cycle. It follows from Lemma~\ref{goodforstudents} that there exist  integers  $s$ and $t$  with $1\leq t,  s\leq r$  and $t\neq s,s+1$ such
that either $i_s<i_t<i_{s+1}$ or  $i_{s+1}<i_t<i_{s}$. Suppose that   $i_s<i_t<i_{s+1}$. Since $a_{2s-1}=(i_s,j_s)$ and $a_{2s}=(i_{s+1},j_s)$  are vertices of
$\MP$ and since  $\MP$ is convex, it follows that $(i_t,j_s)\in \MP$. This vertex corresponds to a chord of the cycle $a_1,\ldots, a_{2r}$. Similarly one argues
if $i_{s+1}<i_t<i_{s}$.
\end{proof}

\begin{Lemma}
\label{goodforstudents}
Let $r \geq 3$ be an integer and $f:[r+1]\to \ZZ$ a function such that
$f(i) \neq f(j)$ for $1 \leq i < j \leq r$ and $f(r+1) = f(1)$.
Then there exist $1 \leq s, \, t \leq r$ such that
one has either $f(s) < f(t) < f(s+1)$ or $f(s+1) < f(t) < f(s)$.
\end{Lemma}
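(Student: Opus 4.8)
The plan is to argue by contradiction. Suppose that for every $s,t\in[r]$ one has neither $f(s)<f(t)<f(s+1)$ nor $f(s+1)<f(t)<f(s)$; equivalently, for each $s=1,\dots,r$ the open interval with endpoints $f(s)$ and $f(s+1)$ contains none of the integers $f(1),\dots,f(r)$. Since $f(1),\dots,f(r)$ are pairwise distinct, this says precisely that $f(s)$ and $f(s+1)$ are \emph{consecutive} elements of the totally ordered finite set $\{f(1),\dots,f(r)\}$, for every $s$.

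Next I would read the hypothesis as describing a cyclic sequence of $r$ distinct integers: because $f(r+1)=f(1)$, the unordered pairs $\{f(1),f(2)\},\{f(2),f(3)\},\dots,\{f(r-1),f(r)\},\{f(r),f(1)\}$ behave like the edges of an $r$-cycle on the values $f(1),\dots,f(r)$. The key observation I want to extract is that each value $f(i)$ with $i\in[r]$ is an endpoint of exactly two of these $r$ pairs, and — this is where the hypothesis $r\geq 3$ enters — the two \emph{other} endpoints of those two pairs are distinct integers. Indeed, for $i\notin\{1,r\}$ they are $f(i-1)$ and $f(i+1)$; for $i=1$ they are $f(2)$ and $f(r)$; for $i=r$ they are $f(r-1)$ and $f(1)$; in each case $r\geq 3$ guarantees that the two indices occurring lie in $[r]$ and are different, so the two values are different.

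Now apply this with $p\in[r]$ chosen so that $f(p)=\min\{f(1),\dots,f(r)\}$. The two pairs containing $f(p)$ join $f(p)$ to two distinct values; but by the opening assumption each of those values must be consecutive to $f(p)$ in the total order, and the only value consecutive to the minimum is the second smallest of $f(1),\dots,f(r)$. Thus both ``other endpoints'' equal that single value, contradicting their distinctness. Hence the assumed situation is impossible and the lemma holds.

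The only delicate point is the bookkeeping in the middle step: one must check that the two cyclic neighbours of the position attaining the minimum are genuinely distinct positions, and this is exactly the place that uses $r\geq 3$ (for $r=2$ the conclusion is false). Everything else is immediate once the problem is rephrased in terms of consecutive values around a cycle; one could equally well run the argument at the maximum instead of the minimum.
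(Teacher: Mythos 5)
Your proof is correct, but it takes a genuinely different route from the paper's. The paper argues directly: assuming $f(1)<f(2)$ (the opposite case being symmetric), it walks along the sequence to the first descent, i.e.\ the index $q$ with $f(1)<f(2)<\cdots<f(q)>f(q+1)$, and then exhibits explicit indices $s,t$ in each of three cases ($q=r$; $q<r$ with $f(q+1)>f(1)$; $q<r$ with $f(q+1)<f(1)$). You instead negate the conclusion, observe that the negation forces each of the $r$ cyclically adjacent pairs $\{f(s),f(s+1)\}$, $1\leq s\leq r$, to consist of consecutive elements of the $r$-element value set $\{f(1),\ldots,f(r)\}$, and then reach a contradiction at the minimum value: its two cyclic neighbours are distinct values (this is exactly where $r\geq 3$ enters, which you correctly flag), yet both would have to equal the unique successor of the minimum. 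Your argument is shorter and symmetric --- no choice of direction, no case split --- and it isolates cleanly the role of $r\geq 3$; the paper's argument has the advantage of being constructive, producing the witnesses $s,t$ explicitly, in the spirit of locating a chord in the cycle in Proposition~\ref{hibiohsugi} (though for that application bare existence suffices). Note also that your argument can be made direct rather than by contradiction: at the position of the minimum, the two distinct larger cyclic neighbours already give the required configuration, taking $f(t)$ to be the smaller of the two neighbours and $s$ the index for which $\{f(s),f(s+1)\}$ joins the minimum to the larger one.
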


\begin{proof}
Let, say, $f(1) < f(2)$.  Since $f(r+1) = f(1)$,
there is $2 \leq q \leq r$ with
\[
f(1) < f(2) < \cdots < f(q) > f(q + 1).
\]
\begin{itemize}
\item
Let $q = r$.  Then, since $q = r \geq 3$, one has $(f(1) =) \, f(r+1)
< f(2) < f(r)$.
\item
Let $q < r$ and $f(q+1) > f(1)$.
Since $f(q+1) \not\in \{f(1), f(2), \ldots, f(q) \}$,
it follows that
there is $1 \leq s < q$ with $f(s) < f(q) < f(s+1)$.
\item
Let $q < r$ and $f(q+1) < f(1)$.
Then one has $f(q+1) < f(1) < f(q)$.
\end{itemize}
The case of $f(1) > f(2)$ can be discussed similarly.
\end{proof}

We denote the graded Betti numbers of $I_\MP$ by $\beta_{ij}(I_\MP)$.

\begin{Corollary}
\label{no}
Let $\MP$ be a convex polyomino. Then $\beta_{1j}(I_\MP)=0$ for $j>4$.
\end{Corollary}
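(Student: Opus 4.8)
The plan is to derive the bound from the quadratic Gr\"obner basis produced in Proposition~\ref{hibiohsugi}, via the principle that a Gr\"obner degeneration can only make the graded Betti numbers larger. First I would fix a monomial order $<$ on $S$ with respect to which $I_\MP$ has a quadratic Gr\"obner basis, and put $J=\ini_<(I_\MP)$; by Proposition~\ref{hibiohsugi} this monomial ideal is generated in degree $2$.

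Next I would invoke the standard comparison $\beta_{ij}(I_\MP)\le \beta_{ij}(J)$, valid for all $i,j$, which comes from the fact that $J$ is the special fibre of a flat one-parameter family whose general fibre is $I_\MP$, so that the graded Betti numbers are upper semicontinuous along the family. Granting this, it remains to check that $\beta_{1j}(J)=0$ for $j>4$, which is elementary: writing $J=(u_1,\dots,u_r)$ with $u_1,\dots,u_r$ the minimal monomial generators, each of degree $2$, the module $\Syz_1(J)$ of first syzygies is generated by the Koszul-type relations $(\lcm(u_k,u_l)/u_k)\,e_k-(\lcm(u_k,u_l)/u_l)\,e_l$, whose degrees equal $\deg\lcm(u_k,u_l)\le 4$. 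Hence $\Syz_1(J)$ is generated in degrees $\le 4$, so $\beta_{1j}(J)=0$, and therefore $\beta_{1j}(I_\MP)=0$, for $j>4$.

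I do not expect a genuine obstacle here; the subtle point to keep in mind is that Proposition~\ref{hibiohsugi} is essential — for an arbitrary ideal generated by quadrics the initial ideal need not be generated in degree $2$, and then the conclusion can fail, so the conclusion really hinges on the edge-ring/chordality input behind that proposition. One should also be careful with the indexing convention, namely $\beta_{ij}(I_\MP)=\beta_{i+1,j}(S/I_\MP)$, so that $\beta_{1j}(I_\MP)$ records the degrees of a minimal system of first syzygies of $I_\MP$, and note that the monomial order in the comparison of Betti numbers may be taken to be exactly the one furnished by Proposition~\ref{hibiohsugi}.
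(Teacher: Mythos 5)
Your proof is correct and follows essentially the same route as the paper: pass to the initial ideal $J=\ini_<(I_\MP)$ using the quadratic Gr\"obner basis from Proposition~\ref{hibiohsugi}, and conclude via the standard comparison $\beta_{1j}(I_\MP)\le\beta_{1j}(J)$. The only (harmless) difference is that where the paper cites a result of Herzog--Srinivasan for the vanishing $\beta_{1j}(J)=0$, $j>4$, you give the elementary Taylor-complex argument that the pairwise relations of degree $\deg\lcm(u_k,u_l)\le 4$ generate $\Syz_1(J)$ for a monomial ideal generated in degree $2$, which is a perfectly valid substitute.
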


\begin{proof}
By Proposition~\ref{hibiohsugi}, there exists  a monomial order $<$ such that $\ini_<(I_\MP)$ is generated in degree 2. Therefore, it follows  from \cite[Corollary 4]{HS} that
$\beta_{1j}(\ini_<(I_\MP))=0$ for $j>4$. Since $\beta_{1j}(I_\MP)\leq \beta_{1j}(\ini_<(I_\MP))$ (see, for example, \cite[Corollary 3.3.3]{HH}), the desired conclusion follows.
\end{proof}

\section{The first syzygy module of a polyomino ideal}

Let $\MP$ be a convex polyomino and let $f_1,\ldots,f_m$ be the minors generating $I_\MP$. In this section we study the relation module $\Syz_1(I_\MP)$ of
$I_\MP$ which is the kernel of the $S$-module homomorphism $\Dirsum_{i=1}^mSe_i\to I_\MP$ with $e_i\mapsto f_i$ for $i=1,\ldots,m$.  The graded module
$\Syz_1(I_\MP)$ has generators in degree $3$ and no generators in degree $>4$, as we have seen in Corollary~\ref{no}. We say that $I_\MP$ (or simply $\MP$) is {\em
linearly related} if $\Syz_1(I_\MP)$ is generated only in degree $3$.

Let $f_i$ and $f_j$ be two distinct generators of $I_\MP$. Then the Koszul relation $f_ie_j-f_je_i$ belongs  $\Syz_1(I_\MP)$. We call $f_i,f_j$ a {\em Koszul
relation pair} if $f_ie_j-f_je_i$ is a minimal generator of $\Syz_1(I_\MP)$. The main result of this section is the following.

\begin{Theorem}
\label{main}
Let $\MP$ be a convex polyomino. The following conditions are equivalent:
\begin{enumerate}
\item[{\em (a)}] $\MP$ is linearly related;
\item[{\em (b)}] $I_\MP$ admits no Koszul relation pairs;
\item[{\em (c)}] Let, as we may assume, $[(1,1),(m,n)]$ be the smallest interval with the property that $V(\MP)\subset [(1,1),(m,n)]$. We refer to the elements $(1,1), (m,1), (1,n)$ and $(m,n)$ as the corners. Then $\MP$ has the shape as displayed in Figure~\ref{shape}, and one of the following conditions hold:
    \begin{enumerate}
    \item[{\em (i)}] at most one of the  corners does not belong to $V(\MP)$;
    \item[{\em (ii)}] two of the corners do not belong to $V(\MP)$,  but they are not opposite to each other. In other words, the missing corners are not   the corners  $(1,1),(n,m)$, or the corners $(m,1),(1,n)$.
\item[{\em (iii)}] three of the corners do not belong to $V(\MP)$. If the missing corners are $(m,1),(1,n)$ and $(m,n)$ (which one may assume without loss  of generality), then referring to Figure~\ref{shape} the following conditions must be satisfied: either $i_2=m-1$ and $j_4\leq j_2$, or $j_2=n-1$ and $i_4\leq i_2$.
    \end{enumerate}
\end{enumerate}
\end{Theorem}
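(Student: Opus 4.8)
The plan is to establish the cycle of implications $\mathrm{(a)}\Leftrightarrow\mathrm{(b)}$ first, then $\mathrm{(c)}\Rightarrow\mathrm{(a)}$, and finally $\mathrm{(a)}\Rightarrow\mathrm{(c)}$. The equivalence $\mathrm{(a)}\Leftrightarrow\mathrm{(b)}$ should be essentially formal: since $\Syz_1(I_\MP)$ has generators only in degrees $3$ and $4$ (Corollary~\ref{no}), $\MP$ fails to be linearly related precisely when there is a minimal generator in degree $4$; one checks that in a toric setting the only degree-$4$ multidegrees that can carry a minimal first syzygy of a quadratically generated ideal are those supported on two disjoint quadrics, i.e.\ Koszul relation pairs $f_ie_j-f_je_i$. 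The clean way to see this is via squarefree divisor complexes: the multigraded Betti number $\beta_{1,\ab}(I_\MP)=\dim_K \tilde H_0(\Delta_\ab)$ where $\Delta_\ab$ is the squarefree divisor complex of the edge ring in multidegree $\ab$, and a nonvanishing $\tilde H_0$ in the relevant degree-$4$ multidegrees forces $\Delta_\ab$ to be a disjoint union of two vertices, each corresponding to one of the two minors in a Koszul pair.

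For $\mathrm{(c)}\Rightarrow\mathrm{(a)}$ I would argue by a direct, if somewhat lengthy, computation of $\Syz_1$ for each of the shapes in (i)--(iii). The key structural tool is the identification $K[\MP]\cong K[G_\MP]$ with the edge ring of the bipartite graph $G_\MP$, together with the theory of squarefree divisor complexes: $\beta_{1,\ab}(I_\MP)\neq 0$ only for $\ab$ of total degree $3$ or $4$, and one must show no degree-$4$ multidegree $\ab$ has $\tilde H_0(\Delta_\ab)\neq 0$. Here Corollary~\ref{inducedsubpolyomino} is decisive: if some induced subpolyomino is \emph{not} linearly related, then neither is $\MP$; conversely the vanishing of degree-$4$ syzygies is a ``local'' condition that, by the classification, can be reduced to checking a finite list of small polyominoes (the minimal non-linearly-related configurations), each handled by a computer algebra computation. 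So the real content of $\mathrm{(c)}\Rightarrow\mathrm{(a)}$ is: enumerate the possible ``obstruction'' subpolyominoes with few cells, verify by computer that each listed shape in (i)--(iii) contains none of them, and conclude.

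For the converse $\mathrm{(a)}\Rightarrow\mathrm{(c)}$ — which I expect to be the main obstacle — the strategy is contrapositive: assume $\MP$ is a convex polyomino violating (c), and produce a Koszul relation pair, hence a degree-$4$ generator of $\Syz_1(I_\MP)$. First one shows that any convex $\MP$ can be placed inside its minimal bounding box $[(1,1),(m,n)]$ with the ``staircase'' boundary shape of Figure~\ref{shape}; the convexity in rows and columns forces the boundary to be monotone between the extreme rows/columns, giving the parameters $i_2,j_2,i_4,j_4$ etc. Then one does a case analysis on how many corners are missing. If two \emph{opposite} corners are missing, or if three corners are missing and the quantitative condition in (iii) fails, one exhibits explicitly two disjoint inner $2$-minors $f_i,f_j$ (supported on four rows and four columns forming a $2\times 2$ ``block plus block'' pattern inside $V(\MP)$) such that the induced subpolyomino on those eight vertices is one of the minimal non-linearly-related shapes; by Corollary~\ref{inducedsubpolyomino} this lifts to a genuine degree-$4$ Betti number of $I_\MP$. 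The delicate point is checking that in each forbidden configuration there really is enough room inside $V(\MP)$ to find such a disjoint pair whose supporting vertices all lie in $V(\MP)$ — this is where the precise inequalities $i_2=m-1, j_4\le j_2$ (etc.) in (iii) get used, as they are exactly the boundary cases where the room runs out and a chord appears that kills the syzygy. I would organize this as a sequence of lemmas, one per corner-deficiency pattern, each reducing to the inspection of a bounded-size polyomino.
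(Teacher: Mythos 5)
Your contrapositive argument for (a) $\Rightarrow$ (c) is essentially the paper's route for (b) $\Rightarrow$ (c): a case analysis on which corners are missing, exhibiting small induced collections of cells that carry a Koszul relation pair and lifting it via Corollary~\ref{inducedsubpolyomino}. The genuine gap is in (c) $\Rightarrow$ (a). Corollary~\ref{inducedsubpolyomino} only transfers \emph{nonvanishing} Betti numbers upward (it is an injection on Tor), so verifying that the shapes in (i)--(iii) contain no ``minimal non-linearly-related'' induced configuration does not prove $\beta_{14}(I_\MP)=0$: a priori $\MP$ could have a degree-$4$ minimal syzygy that is not witnessed by any small induced subcollection, and your concluding step (``contains no obstruction, hence linearly related'') is a non sequitur as stated. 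What makes the vanishing local is the stronger Corollary~\ref{refinement}: for a fixed multidegree $h$ with $|h|=4$, the at most four columns and four rows occurring in $h$ generate a homologically pure subsemigroup $H'$ with $h\in H'$ satisfying the hypothesis there, so $\Tor$ in degree $h$ of the small induced collection maps \emph{isomorphically} to $\Tor$ in degree $h$ of $K[\MP]$. The paper then uses the shape conditions (i)--(iii) to list which small collections can occur, according to how many corners of the bounding rectangle of $h$ lie outside $V(\MP)$ (none, giving a rectangle; or one, two, or three, giving the configurations of Figures~\ref{proof1corner} and~\ref{proof23corner}; the degenerate collinear case gives a simplex), and checks these finitely many cases by computer. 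Your plan never identifies this isomorphism-in-fixed-multidegree mechanism, and without it the ``vanishing of degree-$4$ syzygies is a local condition'' claim is unproved.

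A second, smaller but real error is your proposed ``essentially formal'' proof of (a) $\Leftrightarrow$ (b). The homological indexing is off: $\beta_{1,h}(I_\MP)=\beta_{2,h}(K[\MP])\iso\dim_K\tilde{H}_{1}(\Delta_h,K)$, not $\tilde{H}_0$; and it is not true in general that a degree-$4$ minimal first syzygy of a quadratically generated toric ideal must be a Koszul relation of two disjoint quadrics -- the vertices of $\Delta_h$ are the variables $x_{ij}$, not the minors, so disconnectedness of $\Delta_h$ does not translate into ``two isolated quadrics'' the way you sketch. The paper never proves (b) $\Rightarrow$ (a) directly; it closes the cycle as (a) $\Rightarrow$ (b) (trivial), (b) $\Rightarrow$ (c) (corner lemmas), and (c) $\Rightarrow$ (a). Your overall architecture can survive this, because your contrapositive of (a) $\Rightarrow$ (c) actually produces a Koszul pair and hence proves (b) $\Rightarrow$ (c), after which (b) $\Rightarrow$ (a) follows through (c); but the ``formal'' equivalence should be dropped rather than justified as you propose.
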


As an essential tool in the proof of this theorem we recall the co-called squarefree divisor complex, as introduced in \cite{HH}. Let $K$ be field,  $H\subset
\NN^n$ an affine semigroup and $K[H]$ the semigroup ring attached to it. Suppose that $h_1,\ldots, h_m\in \NN^n$ is the unique minimal set of generators of $H$.
We consider the polynomial ring $T=K[t_1,\ldots,t_n]$ in the variables $t_1,\ldots,t_n$. Then $K[H]=K[u_1,\ldots,u_m]\subset T$ where
$u_i=\prod_{j=1}^nt_j^{h_i(j)}$ and where $h_i(j)$ denotes the $j$th component of the integer vector $h_i$. We choose a presentation $S=K[x_1,\ldots,x_m]\to K[H]$
with $x_i\mapsto u_i$ for $i=1,\ldots,m$. The kernel $I_H$ of this $K$-algebra homomorphism is called the toric ideal of $H$. We assign a $\ZZ^n$-grading to $S$
by setting $\deg x_i=h_i$. Then $K[H]$ as well as $I_H$ become $\ZZ^n$-graded $S$-modules. Thus $K[H]$ admits a minimal $\ZZ^n$-graded  $S$-resolution $\FF$ with
$F_i=\Dirsum_{h\in H} S(-h)^{\beta_{ih}(K[H])}$.

In the case that all $u_i$ are monomials of the same degree, one can assign to  $K[H]$ the structure of a standard graded $K$-algebra by setting  $\deg u_i=1$ for all $i$. The degree of $h$ with respect to this standard grading will  be denoted $|h|$.

Given $h\in H$, we define the {\em squarefree divisor complex} $\Delta_h$ as follows: $\Delta_h$ is the simplicial complex  whose faces $F=\{i_1,\ldots,i_k\}$ are
the subsets of $[n]$ such that $u_{i_1}\cdots u_{i_k}$ divides $t_1^{h(1)}\cdots t_n^{h(n)}$ in $K[H]$. We denote by $\tilde{H}_{i}(\Gamma, K)$ the $i$th reduced
simplicial homology of a simplicial complex $\Gamma$.

\begin{Proposition}[Bruns-Herzog \cite{BH}]
\label{bh}
With the notation and assumptions introduced one has  $\Tor_i(K[H],K)_h\iso\tilde{H}_{i-1}(\Delta_h, K)$. In particular,
$$\beta_{ih}(K[H])=\dim_K\tilde{H}_{i-1}(\Delta_h, K).$$
\end{Proposition}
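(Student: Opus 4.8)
The plan is to compute the multigraded $\Tor$ module directly from a Koszul complex and to recognize its homogeneous strand in degree $h$ as the reduced simplicial chain complex of $\Delta_h$. Write $\mm=(x_1,\ldots,x_m)$, so $K=S/\mm$. Since $S=K[x_1,\ldots,x_m]$ is a polynomial ring, the variables form a regular sequence and the Koszul complex $\MK_\bullet(x_1,\ldots,x_m;S)$ is a (minimal) free resolution of $K$ over $S$. Tensoring it over $S$ with $K[H]$ and using that $x_j$ acts on $K[H]$ as multiplication by $u_j$, one obtains
\[
\Tor_i(K[H],K)\iso H_i\bigl(\MK_\bullet(x_1,\ldots,x_m;S)\otimes_S K[H]\bigr)=H_i\bigl(\MK_\bullet(u_1,\ldots,u_m;K[H])\bigr),
\]
the Koszul homology of $u_1,\ldots,u_m$ acting on $K[H]$. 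This isomorphism is $\ZZ^n$-graded once we place $\deg e_j=h_j$ on the basis $e_1,\ldots,e_m$ of $K[H]^m$, so that the differential (multiplication by the $u_j=t^{h_j}$) is homogeneous of degree $0$. Fixing $h\in H$, it then suffices to identify the homogeneous strand $\MK_\bullet(u_1,\ldots,u_m;K[H])_h$ with the reduced chain complex of $\Delta_h$.

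For $F=\{j_1<\cdots<j_i\}\subset[m]$ put $e_F=e_{j_1}\wedge\cdots\wedge e_{j_i}$ and $h_F=\sum_{j\in F}h_j$; these form a $K[H]$-basis of $\bigwedge^i(K[H]^m)$ with $\deg e_F=h_F$, so the degree-$h$ part is $\Dirsum_{|F|=i}K[H]_{h-h_F}\,e_F$. Because $K[H]=\Dirsum_{a\in H}K\,t^a$, the component $K[H]_{h-h_F}$ is one-dimensional, spanned by $t^{h-h_F}$, exactly when $h-h_F\in H$, and is zero otherwise. Now $h-h_F\in H$ says precisely that $\prod_{j\in F}u_j=t^{h_F}$ divides $t^h$ in $K[H]$, i.e.\ that $F$ is a face of $\Delta_h$; here it is essential that divisibility be taken inside $K[H]$ and not merely componentwise in $T$. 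Hence in homological degree $i$ the degree-$h$ strand has $K$-basis $\{t^{h-h_F}e_F: F\in\Delta_h,\ |F|=i\}$, so it is canonically isomorphic to $\tilde{H}$-type chain group $\tilde{C}_{i-1}(\Delta_h,K)$, the space on the $(i-1)$-dimensional faces. The case $i=0$ gives the empty face $F=\varnothing$, which is a face since $h\in H$, matching $\tilde{C}_{-1}(\Delta_h,K)=K$; this is what forces \emph{reduced} rather than ordinary homology.

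It remains to check that the Koszul differential agrees with the simplicial boundary. On a basis element, $\partial(t^{h-h_F}e_F)=\sum_{\ell=1}^{i}(-1)^{\ell-1}u_{j_\ell}\,t^{h-h_F}e_{F\setminus\{j_\ell\}}$, and since $u_{j_\ell}t^{h-h_F}=t^{h-h_{F\setminus\{j_\ell\}}}$ is exactly the chosen basis vector of the summand indexed by $F\setminus\{j_\ell\}$, the differential becomes $[F]\mapsto\sum_{\ell}(-1)^{\ell-1}[F\setminus\{j_\ell\}]$, which is the reduced simplicial boundary of $\Delta_h$. Taking homology yields $\Tor_i(K[H],K)_h\iso\tilde{H}_{i-1}(\Delta_h,K)$, and $\beta_{ih}(K[H])=\dim_K\Tor_i(K[H],K)_h$ gives the Betti-number formula. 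I expect the only delicate points to be bookkeeping: pinning down the $\ZZ^n$-grading so the tensored Koszul complex is homogeneous, getting the homological index shift right (so that $i$-element faces live in homological degree $i$ and the empty face produces reduced homology), and insisting that the face condition uses divisibility in $K[H]$ rather than in $T$. Once these are fixed the differentials match on the nose and no further computation is required.
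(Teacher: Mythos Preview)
The paper does not prove this proposition; it is simply cited as a known result of Bruns and Herzog \cite{BH}. Your argument is correct and is precisely the standard proof: resolve $K$ by the Koszul complex on $x_1,\ldots,x_m$, tensor with $K[H]$, and observe that the degree-$h$ strand is the augmented simplicial chain complex of $\Delta_h$, with $i$-element subsets sitting in homological degree $i$ so that reduced homology appears with the required index shift. Your care in noting that the face condition is divisibility in $K[H]$ (equivalently $h-h_F\in H$) rather than in $T$, and that the empty face forces reduced homology, addresses exactly the points where one might slip.
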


Let $H'$ be a subsemigroup of $H$ generated by a subset of the set of generators  of $H$, and let $S'$ be the polynomial ring over $K$ in the variables $x_i$ with $h_i$ generator of $ H^\prime$. Furthermore, let $\FF'$ the $\ZZ^{n}$-graded  free $S'$-resolution  of $K[H']$. Then, since $S$ is a flat $S'$-module,  $\FF'\tensor_{S'}S$ is a $\ZZ^n$-graded free $S$-resolution of $S/I_H'S$. The inclusion $K[H']\to K[H]$ induces a $\ZZ^n$-graded complex homomorphism $\FF'\tensor_{S'}S\to \FF$. Tensoring this complex homomorphism with $K=S/\mm$, where $\mm$ is the graded maximal ideal of $S$,  we obtain the following sequence of isomorphisms and  natural maps of $\ZZ^n$-graded $K$-modules
\[
\Tor_i^{S'}(K[H'],K)\iso H_i(\FF'\tensor_{S'}K)\iso H_i(\FF'\tensor_{S'}S)\tensor_SK)\to H_i(\FF\tensor_SK)\iso \Tor_i^S(K[H],K).
\]

For later applications we need

\begin{Corollary}
\label{refinement}
With the notation and assumptions introduced, let  $H'$ be a subsemigroup of $H$ generated by a subset of the set of generators  of $H$, and let  $h$ be an
element of $H'$ with the property  that $h_i\in H'$ whenever  $h-h_i\in H$. Then  the natural $K$-vector space homomorphism $\Tor_i^{S'}(K[H'],K)_h\to
\Tor_i^S(K[H],K)_h$ is an isomorphism for all $i$.
\end{Corollary}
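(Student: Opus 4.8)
The plan is to reduce the statement to the squarefree divisor complex description of Proposition~\ref{bh} and then to argue that, under the hypothesis on $h$, the relevant simplicial complexes computing $\Tor^{S'}_i(K[H'],K)_h$ and $\Tor^S_i(K[H],K)_h$ coincide. First I would recall the two simplicial complexes in play: the squarefree divisor complex $\Delta_h^{H}$ computed inside $K[H]$ with respect to the generators $h_1,\dots,h_m$ of $H$, and the squarefree divisor complex $\Delta_h^{H'}$ computed inside $K[H']$ with respect to the sub-collection of generators that lie in $H'$. By Proposition~\ref{bh} we have $\Tor_i^S(K[H],K)_h\iso\tilde H_{i-1}(\Delta_h^H,K)$ and $\Tor_i^{S'}(K[H'],K)_h\iso\tilde H_{i-1}(\Delta_h^{H'},K)$, and moreover the natural map on $\Tor$ described in the sequence of maps just before the statement is induced by the inclusion of complexes $\Delta_h^{H'}\hookrightarrow\Delta_h^{H}$, which is the identity on those vertices of $\Delta_h^H$ that are indices of generators of $H'$. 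So it suffices to prove that $\Delta_h^{H'}=\Delta_h^H$ as simplicial complexes on a common vertex set.

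The key step is the following claim: every vertex of $\Delta_h^H$ is the index of a generator belonging to $H'$. Indeed, if $\{i\}$ is a face of $\Delta_h^H$, then $u_i$ divides $t_1^{h(1)}\cdots t_n^{h(n)}$ in $K[H]$, which means exactly that $h-h_i\in H$. By the hypothesis on $h$, this forces $h_i\in H'$. Hence every vertex of $\Delta_h^H$ already lies in the vertex set of $\Delta_h^{H'}$; and conversely, since $H'\subset H$, any face of $\Delta_h^{H'}$ is a face of $\Delta_h^H$ (divisibility in $K[H']$ implies divisibility in $K[H]$, as $H'$ is a subsemigroup generated by a subset of the generators of $H$). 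It remains to check that a subset $F=\{i_1,\dots,i_k\}$ of indices of generators of $H'$ is a face of $\Delta_h^H$ if and only if it is a face of $\Delta_h^{H'}$. One direction is immediate. For the other, suppose $u_{i_1}\cdots u_{i_k}$ divides $t_1^{h(1)}\cdots t_n^{h(n)}$ in $K[H]$; this says $h-(h_{i_1}+\cdots+h_{i_k})\in H$. Writing $h'=h-(h_{i_1}+\cdots+h_{i_k})$, I would repeatedly apply the hypothesis: from $h-h_{i_1}\in H$ we get $h_{i_1}\in H'$ (already known), and one checks inductively that $h-(h_{i_1}+\cdots+h_{i_j})$ remains an element to which the hypothesis applies, since each such element $g$ satisfies $g-h_\ell\in H\implies h_\ell\in H'$ whenever $g\le h$ componentwise — this is where a small verification is needed, namely that the defining property of $h$ propagates down the chain. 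Once $h'\in H$ is seen to be writable using only generators of $H'$ (equivalently $h'\in H'$), we conclude $u_{i_1}\cdots u_{i_k}$ divides $t^h$ already in $K[H']$, so $F$ is a face of $\Delta_h^{H'}$.

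With $\Delta_h^{H'}=\Delta_h^H$ established, the inclusion of complexes inducing the natural map on $\Tor$ is the identity, hence an isomorphism on reduced homology in every degree, and the conclusion follows for all $i$. The main obstacle I anticipate is precisely the inductive propagation in the last step: one must be careful that the hypothesis ``$h_i\in H'$ whenever $h-h_i\in H$'' is used only for the specific element $h$, and that passing to $h-h_{i_1}$, $h-h_{i_1}-h_{i_2}$, etc., one still controls which generators can appear — the clean way is to observe that if $h-h_{i_1}-\cdots-h_{i_k}\in H$ and each $h_{i_j}\in H'$, then summing back shows any generator $h_\ell$ with $h-h_\ell\in H$ and $h_\ell$ a ``component'' of a representation of $h$ must satisfy $h_\ell\in H'$, but actually the cleanest route is simply to note that $h'=h-(h_{i_1}+\cdots+h_{i_k})\in H$ together with $h'\le h$ and the hypothesis (applied to see each generator of any representation of $h'$ lies in $H'$) gives $h'\in H'$ directly. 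I would make sure this last implication is spelled out carefully, since it is the only nonformal point.
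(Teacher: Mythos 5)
Your proposal is correct and takes essentially the same route as the paper: identify both Tor groups with reduced homology of squarefree divisor complexes via Proposition~\ref{bh}, note that the natural map is induced by the inclusion of complexes, and show that the hypothesis on $h$ forces the two complexes to coincide, whence the map is an isomorphism. The one step you left slightly vague---why $h'=h-(h_{i_1}+\cdots+h_{i_k})\in H$ actually lies in $H'$---is settled precisely by your ``summing back'' remark rather than by the componentwise inequality $h'\le h$: for any generator $h_\ell$ occurring in a representation of $h'$ one has $h-h_\ell=(h'-h_\ell)+\sum_j h_{i_j}\in H$, so the hypothesis on the single element $h$ gives $h_\ell\in H'$ and hence $h'\in H'$ (the paper itself merely asserts $\Delta_h'=\Delta_h$ without spelling this out).
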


\begin{proof}
Let $\Delta_h'$ be the squarefree divisor complex of $h$ where $h$ is viewed as an element of $H'$. Then we obtain the following commutative diagram
\begin{eqnarray*}
\label{diagram}
\begin{CD}
\Tor_i(K[H'],K)_h @>>> \Tor_i(K[H],K)_h\\
@VVV @VVV\\
\tilde{H}_{i-1}(\Delta_h', K)@>>> \tilde{H}_{i-1}(\Delta_h, K).
\end{CD}
\end{eqnarray*}
The vertical maps are isomorphisms, and also the lower horizontal map is an isomorphism, simply because $\Delta_h'=\Delta_h$, due to assumptions on $h$. This
yields the desired conclusion.
\end{proof}

Let $H\subset \NN^n$ be an affine semigroup generated by $h_1,\ldots, h_m$. An affine subsemigroup $H'\subset H$ generated by  a subset of $\{h_1,\ldots, h_m\}$ will be called a {\em homological pure} subsemigroup of $H$ if for all $h\in H'$ and all $h_i$ with $h-h_i\in H$  it follows that $h_i\in H'$.

\medskip
 As an immediate   consequence of Corollary~\ref{refinement} we  obtain

\begin{Corollary}
\label{homologicallypure}
Let $H'$ be a homologically pure subsemigroup  of $H$. Then $$\Tor_i^{S'}(K[H'],K)\to  \Tor_i^S(K[H],K)$$   is injective for all $i$. In other words, if $\FF'$ is the minimal $\ZZ^n$-graded free $S'$-resolution of $K[H']$ and $\FF$ is the minimal $\ZZ^n$-graded free $S$-resolution of $K[H]$, then the complex homomorphism
$\FF'\tensor S\to \FF$ induces an injective map $\FF'\tensor K\to \FF\tensor K$. In particular, any minimal set of generators of $\Syz_i(K[H'])$ is part of a minimal set of generators  of  $\Syz_i(K[H])$. Moreover, $\beta_{ij}(I_{H'})\leq \beta_{ij}(I_H)$ for all $i$ and $j$.
\end{Corollary}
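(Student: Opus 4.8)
The plan is to derive the entire statement formally from Corollary~\ref{refinement}: a homologically pure subsemigroup $H'\subset H$ is precisely one at which the hypothesis of that corollary is satisfied for \emph{every} $h\in H'$. So the first step is to establish that the natural map $\Tor_i^{S'}(K[H'],K)\to\Tor_i^S(K[H],K)$ is injective for all $i$. This map is $\ZZ^n$-graded, being the composite of the isomorphisms and natural maps displayed just before Corollary~\ref{refinement}; hence it suffices to check injectivity in each degree $h\in\ZZ^n$. If $h\notin H'$ then $\Tor_i^{S'}(K[H'],K)_h=0$, since $K[H']$ is a $\ZZ^n$-graded ring supported on $H'$ and therefore so is every term of its minimal $\ZZ^n$-graded free $S'$-resolution. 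If $h\in H'$, then the defining property of a homologically pure subsemigroup says exactly that $h_i\in H'$ for every $i$ with $h-h_i\in H$, so $h$ meets the hypothesis of Corollary~\ref{refinement} and the map in degree $h$ is even an isomorphism. Injectivity follows.

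Next I would convert this into the statement on minimal generators. Let $\FF'$ and $\FF$ be the minimal $\ZZ^n$-graded free resolutions of $K[H']$ over $S'$ and of $K[H]$ over $S$, and let $\varphi\:\FF'\tensor_{S'}S\to\FF$ be the comparison map lifting $K[H']\subset K[H]$ constructed in the text ($\FF'\tensor_{S'}S$ is an $S$-free resolution of $S/I_{H'}S$ because $S$ is $S'$-flat). Tensoring $\varphi$ over $S$ with $K=S/\mm$ yields a map of complexes $\FF'\tensor_{S'}K\to\FF\tensor_SK$ inducing on homology the $\Tor$-map above. Since $\FF'$ and $\FF$ are minimal, both complexes have vanishing differentials, so their $i$th homology modules are simply $F'_i\tensor K$ and $F_i\tensor K$, and the induced map is $\varphi_i\tensor K\:F'_i\tensor K\to F_i\tensor K$; by the previous step it is injective. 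Therefore $\varphi_i$ carries a basis of the free module $F'_i$ to elements of $F_i$ whose residues modulo $\mm$ are $K$-linearly independent, and by Nakayama's lemma such elements form part of a basis of $F_i$. Rephrased in terms of syzygies, every minimal set of generators of $\Syz_i(K[H'])$ is part of a minimal set of generators of $\Syz_i(K[H])$.

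Finally, the Betti number inequality is immediate: degreewise injectivity of $\Tor_i^{S'}(K[H'],K)_j\to\Tor_i^S(K[H],K)_j$ gives $\beta_{ij}(K[H'])\leq\beta_{ij}(K[H])$ for all $i,j$, and the short exact sequences $0\to I_{H'}\to S'\to K[H']\to0$ and $0\to I_H\to S\to K[H]\to0$ give $\Tor_i(I_{H'},K)\iso\Tor_{i+1}(K[H'],K)$ and $\Tor_i(I_H,K)\iso\Tor_{i+1}(K[H],K)$, whence $\beta_{ij}(I_{H'})\leq\beta_{ij}(I_H)$. I do not expect any genuine difficulty here: the mathematical substance is all in Corollary~\ref{refinement}. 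The one step that wants care is the identification, in the second paragraph, of the natural $\Tor$-map with the reduction $\varphi_i\tensor K$ of the comparison map of minimal resolutions --- this is precisely where minimality is used (to kill the differentials after tensoring with $K$) together with Nakayama's lemma (to turn ``injective after $\tensor K$'' into ``extends to a basis'', i.e.\ ``is part of a minimal generating set'').
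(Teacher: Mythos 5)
Your proposal is correct and follows exactly the route the paper intends: the paper derives this corollary as an immediate consequence of Corollary~\ref{refinement}, and your argument simply spells out the routine details (vanishing of $\Tor_i^{S'}(K[H'],K)_h$ for $h\notin H'$, the isomorphism in degrees $h\in H'$ from the homological purity hypothesis, minimality killing the differentials so the $\Tor$-map is $\varphi_i\tensor K$, Nakayama, and the degree shift between $\Tor_i(I,K)$ and $\Tor_{i+1}(S/I,K)$). No gaps.
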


\medskip
We fix a field $K$ and  let $\MP\subset [(1,1),(m,n)]$  be a convex polyomino. Let as before $S$ be the polynomial ring over $K$ in
the variables $x_{ij}$ with $(i,j)\in V(\MP)$ and $K[\MP]$ the $K$-subalgebra of the polynomial ring $T=K[s_1,\ldots,s_m,t_1,\ldots,t_n]$ generated by the
monomials $u_{ij}=s_it_j$ with $(i,j)\in V(\MP)$. Viewing  $K[\MP]$ as a semigroup ring $K[H]$, it is convenient to  identify the semigroup elements with the monomial they represent.

Given sets $\{i_1,i_2,\ldots, i_s\}$ and $\{j_1,j_2,\ldots, j_t\}$  of integers with $i_k\subset [m]$ and $j_k\subset [n]$ for all $k$, we  let $H'$ be the subsemigroup of $H$ generated by the elements $s_{i_k}t_{j_l}$  with $(i_k,j_l)\in V(\MP)$. Then $H'$  a homologically pure subsemigroup of $H$.  Note that $H'$ is also a combinatorially pure subsemigroup of $H$ in the sense of \cite{HHO}.

A collection of cells  $\MP'$ will be called a {\em collection of cells}  of $\MP$ {\em induced}  by the columns  $i_1,i_2,\ldots, i_s$ and the rows   $j_1,j_2,\ldots, j_t$, if the following holds:  $(k,l)\in V(\MP')$ if and only if $(i_k,j_l)\in V(\MP)$. Observe that $K[\MP']$ is always a domain, since it is a $K$-subalgebra of $K[\MP]$.  The map $V(\MP')\to V(\MP)$, $(k,l)\mapsto (i_k,j_l)$   identifies $I_{\MP'}$ with the  ideal contained in $I_{\MP}$ generated by those $2$-minors of $I(\MP)$ which only involve the variables $x_{i_k,j_l}$.  In the following we always identify $I_{\MP'}$ with this subideal of $I_{\MP}$.

If the induced collection of cells of  $\MP'$ is a polyomino, we call it an {\em induced polyomino}. Any    induced polyomino $\MP'$ of  $\MP$ is again convex.

Consider for example  the polyomino $\MP$  on the left side of Figure~\ref{combinatorial} with left lower corner   $(1,1)$. Then the induced polyomino $\MP'$ shown on the right side of Figure~\ref{combinatorial} is induced by the columns $1,3,4$ and the rows  $1,2,3,4$.

\begin{figure}[hbt]
\begin{center}
\psset{unit=0.9cm}
\begin{pspicture}(4.5,-1)(4.5,3.5)
\rput(-4,0.5){
\pspolygon[style=fyp,fillcolor=light](2.8,-1)(2.8,0)(3.8,0)(3.8,-1)
\pspolygon[style=fyp,fillcolor=light](2.8,0)(2.8,1)(3.8,1)(3.8,0)
\pspolygon[style=fyp,fillcolor=light](3.8,-1)(3.8,0)(4.8,0)(4.8,-1)
\pspolygon[style=fyp,fillcolor=light](4.8,0)(4.8,1)(5.8,1)(5.8,0)
\pspolygon[style=fyp,fillcolor=light](4.8,1)(4.8,2)(5.8,2)(5.8,1)
\pspolygon[style=fyp,fillcolor=light](3.8,1)(3.8,2)(4.8,2)(4.8,1)
\pspolygon[style=fyp,fillcolor=light](3.8,0)(3.8,1)(4.8,1)(4.8,0)
}
\rput(0.3,-1){$\MP$}

\rput(4.5,0.5){
\pspolygon[style=fyp,fillcolor=light](2.8,-1)(2.8,0)(3.8,0)(3.8,-1)
\pspolygon[style=fyp,fillcolor=light](2.8,0)(2.8,1)(3.8,1)(3.8,0)
\pspolygon[style=fyp,fillcolor=light](3.8,1)(3.8,2)(4.8,2)(4.8,1)
\pspolygon[style=fyp,fillcolor=light](3.8,0)(3.8,1)(4.8,1)(4.8,0)
}
\rput(8.3,-1){$\MP'$}
\end{pspicture}
\end{center}
\caption{}\label{combinatorial}
\end{figure}

\medskip
Obviously Corollary~\ref{homologicallypure} implies

\begin{Corollary}
\label{inducedsubpolyomino}
Let $\MP'$ be an induced  collection of cells   of $\MP$. Then $\beta_{ij}(I_{\MP'})\leq \beta_{ij}(I_\MP)$ for all $i$ and $j$, and each minimal relation of $I_{\MP'}$ is also a minimal relation of $I_\MP$.
\end{Corollary}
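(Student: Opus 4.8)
The plan is to recognize that Corollary~\ref{inducedsubpolyomino} is just the translation of Corollary~\ref{homologicallypure} into the language of polyominoes, once the induced collection of cells is matched up with an appropriate subsemigroup of $H$. Concretely, suppose $\MP'$ is induced by the columns $i_1,\ldots,i_s$ and the rows $j_1,\ldots,j_t$, and let $H'$ be the subsemigroup of $H$ generated by the monomials $s_{i_k}t_{j_l}$ with $(i_k,j_l)\in V(\MP)$. The first step is to observe that the isomorphism $K[\MP']\iso K[H']$ identifies $I_{\MP'}$ (viewed, as agreed above, as the subideal of $I_\MP$ generated by the $2$-minors involving only the variables $x_{i_k,j_l}$) with the toric ideal $I_{H'}$, and that this identification is compatible with the standard gradings, since all generators $s_{i_k}t_{j_l}$ have the same $T$-degree and are assigned degree $1$. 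Hence it suffices to prove the inequality for $\beta_{ij}(I_{H'})$ versus $\beta_{ij}(I_H)$ together with the corresponding statement about minimal relations.

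Next I would invoke the fact, already recorded in the paragraph preceding the statement, that $H'$ is a homologically pure subsemigroup of $H$: for $h\in H'$ and a generator $s_at_b$ of $H$ with $h-s_at_b\in H$, divisibility of $h$ by $s_at_b$ together with $h\in H'$ forces $a\in\{i_1,\ldots,i_s\}$ and $b\in\{j_1,\ldots,j_t\}$, so $s_at_b\in H'$. With this in hand, Corollary~\ref{homologicallypure} applies verbatim and yields at once that $\beta_{ij}(I_{H'})\leq\beta_{ij}(I_H)$ for all $i$ and $j$, that is, $\beta_{ij}(I_{\MP'})\leq\beta_{ij}(I_\MP)$; and that any minimal set of generators of $\Syz_i(K[H'])$ is part of a minimal set of generators of $\Syz_i(K[H])$. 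Taking $i$ to be the homological degree in which the relation module of the ideal sits, so that $\Syz_i(K[H])$ is the first syzygy module $\Syz_1(I_H)$, the latter says that a minimal relation of $I_{\MP'}$ belongs to a minimal generating system of $\Syz_1(I_\MP)$, hence is a minimal relation of $I_\MP$.

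There is no genuine obstacle here; the entire content is carried by Corollary~\ref{homologicallypure}, and the only point that needs a moment's care is the passage from the $\ZZ^{m+n}$-multigraded Betti numbers, in which Corollaries~\ref{refinement} and \ref{homologicallypure} are phrased, to the single $\ZZ$-graded Betti numbers $\beta_{ij}$. This is immediate, because the coarsening of the multigrading to the standard grading is compatible with the inclusion $S'\hookrightarrow S$, so the injectivity of $\Tor_i^{S'}(K[H'],K)\to\Tor_i^S(K[H],K)$ in each multidegree sums to injectivity in each single degree, which gives both the Betti inequality and the assertion that minimal generators map to (part of) a minimal generating set.
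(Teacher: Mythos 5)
Your proposal is correct and follows exactly the paper's route: the paper deduces Corollary~\ref{inducedsubpolyomino} directly from Corollary~\ref{homologicallypure}, using the observation (stated just before the corollary) that the subsemigroup $H'$ generated by the monomials $s_{i_k}t_{j_l}$ with $(i_k,j_l)\in V(\MP)$ is homologically pure in $H$. Your added verifications — why $H'$ is homologically pure and why the multigraded statement coarsens to the standard grading — are details the paper leaves implicit, but the argument is the same.
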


We will now use Corollary~\ref{inducedsubpolyomino} to isolate step by step the linearly related polyominoes.

\begin{Lemma}
\label{begin}
Suppose $\MP$ admits an induced collection of cells $\MP^\prime$  isomorphic to one of those displayed in Figure~\ref{restricted}. Then $I_\MP$ has a Koszul relation pair.
\end{Lemma}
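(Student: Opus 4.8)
\textbf{Proof plan for Lemma~\ref{begin}.}
The strategy is to reduce the general statement to a finite computation by exploiting Corollary~\ref{inducedsubpolyomino}. Since $\MP'$ is an induced collection of cells of $\MP$, any minimal relation of $I_{\MP'}$ is also a minimal relation of $I_\MP$; in particular, if $I_{\MP'}$ admits a Koszul relation pair $f_i e_j - f_j e_i$, i.e.\ this element is a minimal generator of $\Syz_1(I_{\MP'})$ of degree $4$, then the same element is a minimal generator of $\Syz_1(I_\MP)$, so $I_\MP$ has a Koszul relation pair as well. Hence it suffices to verify the claim for each of the finitely many collections of cells displayed in Figure~\ref{restricted}.

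For each such small collection $\MP'$, the plan is to identify $K[\MP']$ with the semigroup ring $K[H']$ attached to the bipartite edge graph $G_{\MP'}$, as explained after Theorem~\ref{ayesha}, and then to apply the Bruns--Herzog formula (Proposition~\ref{bh}): for the multidegree $h = u_{i}\cdot u_{j} = (s_{a}t_{b})(s_{c}t_{d})$ corresponding to the product of the two leading monomials of a candidate Koszul pair $f_i, f_j$, one has
\[
\beta_{1,h}(K[\MP']) = \dim_K \tilde H_0(\Delta_h, K),
\]
so the Koszul relation $f_i e_j - f_j e_i$ is a minimal generator of $\Syz_1$ precisely when the squarefree divisor complex $\Delta_h$ is disconnected. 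Concretely, $h$ is a product of exactly four of the variables $s_1,\dots,t_n$ (two $s$'s and two $t$'s, all distinct, since the two minors involve disjoint rows and disjoint columns), and the vertices of $\Delta_h$ are those generators $u_{k\ell} = s_k t_\ell$ of $H'$ that divide $t_1^{h(1)}\cdots$, i.e.\ those cells $(k,\ell)\in V(\MP')$ with $k\in\{a,c\}$, $\ell\in\{b,d\}$; and a two-element face $\{u_{k\ell}, u_{k'\ell'}\}$ lies in $\Delta_h$ iff $u_{k\ell}u_{k'\ell'} = h$, i.e.\ iff $\{k,k'\}=\{a,c\}$ and $\{\ell,\ell'\}=\{b,d\}$. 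So I would simply read off from each picture in Figure~\ref{restricted} the relevant four corner vertices $(a,b),(a,d),(c,b),(c,d)$, observe that the ``missing'' vertex (or the configuration of present vertices) forces $\Delta_h$ to split into at least two connected components, and conclude $\beta_{1,h}\neq 0$, hence the existence of a Koszul relation pair.

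The only real content is the bookkeeping: for each configuration in Figure~\ref{restricted} one must exhibit the specific pair of generating minors $f_i, f_j$ and the specific multidegree $h$ that witnesses the Koszul relation, and then check disconnectedness of $\Delta_h$ — a routine but case-dependent verification that can be carried out by hand or, as the introduction indicates is the paper's general method, with a computer algebra system. I expect the main (mild) obstacle to be organizing the cases in Figure~\ref{restricted} uniformly so that a single description of $\Delta_h$ covers all of them, rather than any genuine mathematical difficulty; once the translation to squarefree divisor complexes is in place, each case reduces to counting connected components of a graph on at most four vertices.
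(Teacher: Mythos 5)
Your reduction step is exactly the paper's: by Corollary~\ref{inducedsubpolyomino} a Koszul relation pair of $I_{\MP'}$ remains a minimal generator of $\Syz_1(I_{\MP})$, so it suffices to treat the finitely many configurations of Figure~\ref{restricted}; the paper then simply computes $\Syz_1(I_{\MP'})$ with CoCoA/Singular and records that $f_a=[12|12]$ and $f_b=[34|34]$ form a Koszul relation pair. Your proposed hand verification via squarefree divisor complexes, however, is set up incorrectly, in three related ways. First, by Proposition~\ref{bh} the number $\dim_K\tilde{H}_0(\Delta_h,K)$ equals $\beta_{1h}(K[\MP'])$, which counts minimal generators of the ideal $I_{\MP'}$ in degree $h$, not minimal generators of $\Syz_1(I_{\MP'})$; minimal first syzygies are governed by $\beta_{2h}(K[\MP'])=\dim_K\tilde{H}_1(\Delta_h,K)$. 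Second, the multidegree of the Koszul syzygy $f_ae_b-f_be_a$ is $\deg f_a+\deg f_b$, which for the pair above (in the $[(1,1),(4,4)]$ embedding) is $h=s_1s_2s_3s_4t_1t_2t_3t_4$, of standard degree $4$ --- not the degree-$2$ element $(s_at_b)(s_ct_d)$ with ``two $s$'s and two $t$'s'' that you describe. The complex you build, with at most four corner vertices joined only along the two diagonals, is exactly the complex whose disconnectedness certifies that a single $2$-minor is a minimal generator of $I_{\MP'}$; its $\tilde{H}_0$ says nothing about syzygies, so the proposed check could not detect a Koszul relation pair in any of the cases.

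Third, even after correcting to the criterion $\tilde{H}_1(\Delta_h,K)\neq 0$ in the degree-$4$ multidegree $h=\deg f_a+\deg f_b$, you would only have shown that $\Syz_1(I_{\MP'})$ has \emph{some} minimal generator of degree $4$. That suffices to conclude that $I_{\MP}$ is not linearly related, but the lemma (and condition (b) of Theorem~\ref{main}, for which it is used) asserts the stronger fact that the specific Koszul relation $f_ae_b-f_be_a$ is itself a minimal generator; passing from a nontrivial class in $\tilde{H}_1(\Delta_h,K)$ to this statement requires an additional argument identifying the degree-$h$ strand of $\Syz_1(I_{\MP'})$, or, as in the paper, an explicit computation of $\Syz_1(I_{\MP'})$ for the two configurations. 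So the skeleton of your argument matches the paper, but the verification mechanism as written is aimed at the wrong Betti number, the wrong multidegree, and the wrong homological criterion, and would need to be repaired along these lines.
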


\begin{proof}
We may assume that $V(\MP')\subset [(1,1),(4,4)]$. By using CoCoA~\cite{Co} or Singular~\cite{DGPS} to compute $\Syz_1(I_{\MP'})$ we see that the minors $f_a=[12|12]$ and $f_b=[34|34]$
form a Koszul relation pair of $I_{\MP'}$. Thus the assertion follows from Corollary~\ref{inducedsubpolyomino}.
\end{proof}

\begin{figure}[hbt]
\begin{center}
\psset{unit=0.9cm}
\begin{pspicture}(4.5,-1)(4.5,3.5)

\rput(-3,1){

\pspolygon[style=fyp,fillcolor=light](2.8,-1)(2.8,0)(3.8,0)(3.8,-1)
\pspolygon[style=fyp,fillcolor=light](3.8,-1)(3.8,0)(4.8,0)(4.8,-1)
\pspolygon[style=fyp,fillcolor=light](4.8,0)(4.8,1)(5.8,1)(5.8,0)
\pspolygon[style=fyp,fillcolor=light](4.8,1)(4.8,2)(5.8,2)(5.8,1)
\pspolygon[style=fyp,fillcolor=light](4.8,-1)(4.8,0)(5.8,0)(5.8,-1)

\rput(4.5,-2){(a)}
}

\rput(2.5,1){

\pspolygon[style=fyp,fillcolor=light](2.8,-1)(2.8,0)(3.8,0)(3.8,-1)
\pspolygon[style=fyp,fillcolor=light](3.8,-1)(3.8,0)(4.8,0)(4.8,-1)
\pspolygon[style=fyp,fillcolor=light](4.8,0)(4.8,1)(5.8,1)(5.8,0)
\pspolygon[style=fyp,fillcolor=light](4.8,1)(4.8,2)(5.8,2)(5.8,1)

\rput(4.5,-2){(b)}
}
\end{pspicture}
\end{center}
\caption{$\MP'$}\label{restricted}
\end{figure}


\begin{Corollary}
\label{corner}
Let $\MP$ be a convex polyomino, and let $[(1,1),(m,n)]$ be the smallest interval with the property that $V(\MP)\subset [(1,1),(m,n)]$. We assume that $m,n\geq
4$. If one of the vertices $(2,2), (m-1,2), (m-1,n-1)$ or $(2,n-1)$ does not belong to $V(\MP)$, then $I_{\MP}$ has a Koszul relation pair, and, hence, $I_{\MP}$ is
not linearly related.
\end{Corollary}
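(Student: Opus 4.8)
The plan is to reduce each case to an application of Lemma~\ref{begin} by exhibiting, inside $\MP$, an induced collection of cells isomorphic to one of the two configurations of Figure~\ref{restricted}. By symmetry (rotating/reflecting the interval $[(1,1),(m,n)]$), it suffices to treat the case where the absent vertex is $(2,2)$. So assume $(2,2)\notin V(\MP)$ while $m,n\ge 4$.

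First I would extract the combinatorial consequences of convexity together with minimality of the bounding interval. Minimality means that $\MP$ has at least one cell meeting column $1$, at least one meeting column $m$, at least one meeting row $1$, and at least one meeting row $n$; equivalently, every one of the rows $1,\dots,n-1$ supports a cell and every one of the columns $1,\dots,m-1$ supports a cell. Since $(2,2)\notin V(\MP)$, no cell of $\MP$ has $(1,1)$ or $(2,1)$ or $(1,2)$ as its lower-left corner touching $(2,2)$; in fact $(2,2)\notin V(\MP)$ forces that the cells with lower-left corner in column $1$ all lie in rows $\ge 2$ is impossible too — more carefully, convexity implies $V(\MP)$ restricted to any row or column is an interval, so the absence of $(2,2)$ says that in row $2$ the vertices of $\MP$ form an interval not containing the value $2$, hence either all $\ge 3$ or (vacuously) the row is empty; similarly for column $2$. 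Using that rows $1$ and columns $1$ are met, I would then locate: a cell $C_1$ with lower-left corner $(1,j_1)$ for some $j_1$ (in column $1$), a cell $C_2$ with lower-left corner $(i_2,1)$ (in row $1$), and — because $(2,2)$ is missing — these must sit ``far'' from the corner $(1,1)$ in a way that creates a staircase. The key point to nail down is that one can then choose two columns $c<c'$ and two rows $r<r'$ from among the indices actually used by $\MP$, with $c'=c+1$ or $r'=r+1$ not required, such that the induced subpolyomino on $\{c,c'\}\times\{r,r'\}$ plus one neighboring cell realizes configuration (a) or (b) of Figure~\ref{restricted} — i.e.\ an $L$-tromino-type shape with an extra cell, the diagnostic feature being a ``notch'' at the position corresponding to the missing $(2,2)$.

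Concretely, here is the construction I expect to use. Since column $1$ is met, pick the cell of $\MP$ in column $1$ whose lower-left corner $(1,a)$ has $a$ minimal; since row $1$ is met, pick the cell in row $1$ with lower-left corner $(b,1)$, $b$ minimal. Because $(2,2)\notin V(\MP)$ and $V(\MP)$ is row- and column-convex, one checks $a\ge 2$ or $b\ge 2$ (they cannot both equal $1$, else $(1,1),(2,1),(1,2)\in V(\MP)$ and convexity along row $2$/column $2$ together with the cell structure would force $(2,2)\in V(\MP)$). Say $b\ge 2$, i.e.\ no cell of $\MP$ lies in the first two vertex-columns of row $1$; meanwhile some cell does lie in row $1$ (at column $\ge 2$) and some cell lies in column $1$ (at row $\ge 2$). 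Tracking connectedness between these two cells and invoking convexity, I would produce vertices of $\MP$ at four lattice points $(p,q),(p,q+1),(p+1,q),(p+1,q+1)$ forming a cell, plus a second cell sharing an edge, plus the guarantee that the lattice point playing the role of the missing corner is absent — this is exactly picture (a) or (b). Then the map $(k,\ell)\mapsto(\text{chosen column}_k,\text{chosen row}_\ell)$ exhibits $\MP'$ as an induced collection of cells of $\MP$ isomorphic to one in Figure~\ref{restricted}, so Lemma~\ref{begin} gives a Koszul relation pair in $I_{\MP'}$, which by Corollary~\ref{inducedsubpolyomino} is a minimal relation of $I_\MP$ in degree $4$; hence $I_\MP$ is not linearly related.

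The main obstacle is the bookkeeping in the previous paragraph: carefully converting ``$(2,2)\notin V(\MP)$'' plus ``$[(1,1),(m,n)]$ is the \emph{smallest} bounding interval'' plus convexity into the explicit existence of the right pair of columns and pair of rows, and confirming that the induced shape is genuinely one of Figure~\ref{restricted}(a) or (b) rather than some larger or degenerate configuration. I would organize this by a short case split on which of $a,b$ exceeds $1$ and by how much, using row/column convexity to fill in forced cells; each case should terminate quickly in one of the two target shapes. Everything else is a direct appeal to Lemma~\ref{begin} and Corollary~\ref{inducedsubpolyomino}.
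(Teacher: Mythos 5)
Your overall strategy coincides with the paper's: exhibit inside $\MP$ an induced collection of cells isomorphic to one of the configurations of Figure~\ref{restricted}, then quote Lemma~\ref{begin} and Corollary~\ref{inducedsubpolyomino}. However, the step you set aside as ``bookkeeping'' is the entire content of the corollary, and the partial reasoning you offer for it is both weaker than what is true and not enough to finish. From $(2,2)\notin V(\MP)$ one gets at once that \emph{all four} vertices of $[(1,1),(2,2)]$ are absent, since every cell having $(1,1)$, $(1,2)$ or $(2,1)$ as a vertex also has $(2,2)$ as a vertex; consequently \emph{every} cell of $\MP$ meeting vertex-column $1$ is of the form $[(1,j),(2,j+1)]$ with $2<j\leq n-1$, and \emph{every} cell meeting vertex-row $1$ is of the form $[(i,1),(i+1,2)]$ with $2<i\leq m-1$ (such cells exist by minimality of the bounding interval). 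Your assertion ``$a\geq 2$ or $b\geq 2$'' is strictly weaker than this, and its justification is off: $a=1$ is impossible simply because the cell $[(1,1),(2,2)]$ would contain $(2,2)$, no convexity argument about row $2$ is needed, and $a=2$ is equally impossible. More importantly, your subsequent branch ``say $b\geq 2$'' tracks only one coordinate direction; with information about the row-$1$ cell alone you cannot exclude cells near the corner from the induced configuration, so you cannot pin it down to Figure~\ref{restricted}.

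The paper's resolution of exactly this point is to take the collection induced by the columns $1,2,i,i+1$ and the rows $1,2,j,j+1$: the two cells above supply the two extreme cells of the configuration, the absent $2\times 2$ block of vertices supplies the notch, and row/column convexity of $V(\MP)$ forces the remaining cells, so that the induced collection is isomorphic to (a) or (b) of Figure~\ref{restricted}. Note also that configuration (b) is a \emph{disconnected} four-cell collection, so your description of the target as ``a cell plus a second cell sharing an edge'' does not match what has to be produced; this is precisely why Lemma~\ref{begin} and Corollary~\ref{inducedsubpolyomino} are formulated for induced collections of cells rather than induced polyominoes. As it stands, then, your proposal has the right reduction in mind but a genuine gap at its key step.
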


\begin{proof}
We may assume that $(2,2)\not\in V(\MP)$. Then the vertices of the interval  $[(1,1),(2,2)]$ do not belong to $V(\MP)$. Since $[(1,1),(m,n)]$ is the smallest
interval containing $V(\MP)$, there exist, therefore, integers $i$ and $j$ with $2< i\leq m-1$ and $2< j\leq n-1$ such that the cells $[(i,1),(i+1,2)]$ and
$[(1,j),(2,j+1)]$ belong to $\MP$. Then the collection of cells induced by the rows $1,2,i,i+1$ and the columns $1,2,j,j+1$ is isomorphic to  one of the collections  $\MP'$ of Figure~\ref{restricted}. Thus the assertion follows from Lemma~\ref{begin} and Corollary~\ref{inducedsubpolyomino}.
\end{proof}

Corollary~\ref{corner} shows that  the  convex polyomino $\MP$ should contain all the vertices $(2,2), (m-1,2), (m-1,n-1)$ and $(2,n-1)$ in order to be linearly
related. Thus a polyomino which is linearly related must have the shape as indicated in Figure~\ref{shape}. The number $i_1$ is also allowed to be $1$ in which case also $j_1=1$. In this case the polyomino contains the corner $(1,1)$. A similar  convention applies to the other corners. In Figure~\ref{shape} all for corners $(1,1), (1,n), (m,1)$ and $(m,n)$ are missing.

\begin{figure}[hbt]
\begin{center}
\psset{unit=0.4cm}
\begin{pspicture}(6,-1)(6,12)
{
\pspolygon(0,2)(0,6)(1,6)(1,9)(4,9)(4,10)(10,10)(10,9)(12,9)(12,7)(13,7)(13,3)(12,3)(12,0)(11,0)(11,-1)(4,-1)(4,0)(1,0)(1,2)
}
\rput(-0.5,-0.2){$(2,2)$}
\rput(14.5,-0.2){$(m-1,2)$}
\rput(-1.4,9.2){$(2,n-1)$}
\rput(15.5,9.2){$(m-1,n-1)$}
\rput(1,0){$\bullet$}
\rput(1,9){$\bullet$}
\rput(12,9){$\bullet$}
\rput(12,0){$\bullet$}

\rput(4,-1.6){$i_1$}
\rput(11.2,-1.6){$i_2$}
\rput(4,10.7){$i_3$}
\rput(10.2,10.7){$i_4$}
\rput(-0.7,2.2){$j_1$}
\rput(-0.7,6.1){$j_2$}
\rput(13.7,6.8){$j_4$}
\rput(13.7,3){$j_3$}
\end{pspicture}
\end{center}
\caption{Possible shape}\label{shape}
\end{figure}
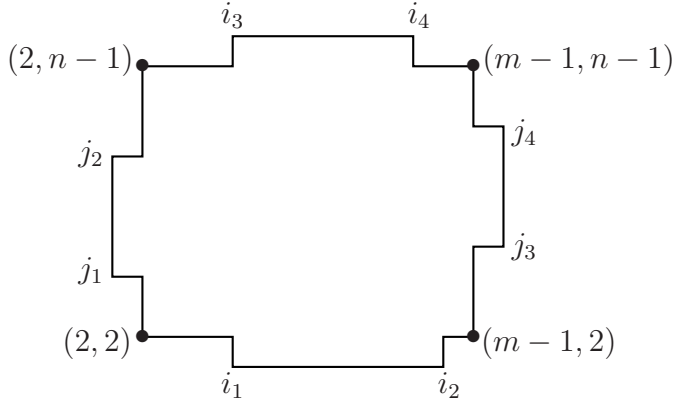

The convex polyomino displayed in Figure~\ref{not} however is not linearly related, though it has the shape as shown in Figure~\ref{shape}. Thus there must still be other obstructions for a polyomino to be
linearly related.
\begin{figure}[hbt]
\begin{center}
\psset{unit=0.9cm}
\begin{pspicture}(4.5,-1)(4.5,3.5)
{
\pspolygon[style=fyp,fillcolor=light](4,0)(4,1)(5,1)(5,0)
\pspolygon[style=fyp,fillcolor=light](5,0)(5,1)(6,1)(6,0)
\pspolygon[style=fyp,fillcolor=light](3,1)(3,2)(4,2)(4,1)
\pspolygon[style=fyp,fillcolor=light](4,1)(4,2)(5,2)(5,1)
\pspolygon[style=fyp,fillcolor=light](3,0)(3,1)(4,1)(4,0)
\pspolygon[style=fyp,fillcolor=light](5,1)(5,2)(6,2)(6,1)
\pspolygon[style=fyp,fillcolor=light](4,2)(4,3)(5,3)(5,2)
\pspolygon[style=fyp,fillcolor=light](5,2)(5,3)(6,3)(6,2)
\pspolygon[style=fyp,fillcolor=light](6,0)(6,1)(7,1)(7,0)
\pspolygon[style=fyp,fillcolor=light](6,1)(6,2)(7,2)(7,1)
\pspolygon[style=fyp,fillcolor=light](4,-1)(4,0)(5,0)(5,-1)
\pspolygon[style=fyp,fillcolor=light](5,-1)(5,0)(6,0)(6,-1)
\pspolygon[style=fyp,fillcolor=light](2,-1)(2,0)(3,0)(3,-1)
\pspolygon[style=fyp,fillcolor=light](3,-1)(3,0)(4,0)(4,-1)
}
\end{pspicture}
\end{center}
\caption{Not linearly related}\label{not}
\end{figure}
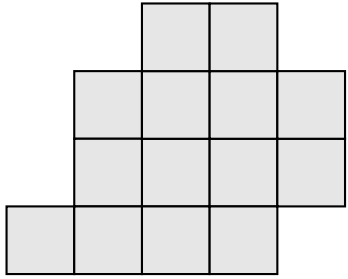

Now we proceed further in eliminating those polyominoes which are not linearly related.

\begin{Lemma}
\label{opposite}
Let $\MP$ be a convex polyomino, and let $[(1,1),(m,n)]$ be the smallest interval with the property that $V(\MP)\subset [(1,1),(m,n)]$. If $\MP$ misses only two  opposite corners, say $(1,1)$ and $(m,n)$, or  $\MP$ misses all four corners $(1,1)$, $(1,n)$, $(m,1)$ and $(m,n)$, then $I_\MP$ admits a Koszul pair and hence is not linearly related.
\end{Lemma}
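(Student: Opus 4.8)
We wish to show that $I_{\MP}$ has a Koszul relation pair, so by Corollary~\ref{inducedsubpolyomino} it suffices to exhibit, inside $\MP$, an induced collection of cells isomorphic to a (small) convex polyomino that is already known to have one. The plan is therefore: first use Corollary~\ref{corner} to reduce to the case that $\MP$ has the shape of Figure~\ref{shape}, that is, $(2,2),(m-1,2),(m-1,n-1),(2,n-1)\in V(\MP)$; then extract such an induced subconfiguration and appeal to a computer computation as in Lemma~\ref{begin}.

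I would first record two elementary facts. Since $\MP$ is convex, $V(\MP)$ meets every row and every column in an interval; combined with the four inner corners lying in $V(\MP)$, this forces the whole inner rectangle $[(2,2),(m-1,n-1)]$ to be contained in $V(\MP)$, so in particular rows $2$ and $n-1$ are ``full'' between columns $2$ and $m-1$, and columns $2$ and $m-1$ are ``full'' between rows $2$ and $n-1$. Secondly, if a corner, say $(1,1)$, is missing, then minimality of $[(1,1),(m,n)]$ forces a cell of $\MP$ in column~$1$ and a cell of $\MP$ in row~$1$, so there are $i_1,j_1\geq 2$ with $(i_1,1),(i_1+1,1),(i_1,2),(i_1+1,2)\in V(\MP)$ and $(1,j_1),(2,j_1),(1,j_1+1),(2,j_1+1)\in V(\MP)$. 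Putting these together, near the missing corner one can always select columns $c<c'$ and rows $r<r'$ with $(c,r)\notin V(\MP)$ but $(c',r),(c,r'),(c',r')\in V(\MP)$, i.e.\ a notch that ``looks like depth one'' after restriction; symmetric statements hold at the other corners.

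Now split into the two cases. If $\MP$ misses exactly the opposite corners $(1,1)$ and $(m,n)$, then $(m,1),(1,n)\in V(\MP)$, hence also $(m-1,1),(1,n-1)\in V(\MP)$; running the selection above at the two missing corners, I would choose columns $c_1<c_2<c_3<c_4$ and rows $r_1<r_2<r_3<r_4$ so that the induced collection of cells is isomorphic to the $4\times 4$ grid with the two corner cells at positions $(1,1)$ and $(4,4)$ removed --- a convex polyomino with seven cells. A computation with CoCoA~\cite{Co} or Singular~\cite{DGPS} shows this polyomino has a Koszul relation pair, so Corollary~\ref{inducedsubpolyomino} concludes. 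If $\MP$ misses all four corners, I would instead produce, by the same method applied at all four corners, an induced collection of cells isomorphic to the $4\times 4$ grid with all four corner cells removed, which is likewise checked by computer to have a Koszul relation pair.

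The step I expect to be the main obstacle is the bookkeeping that makes the choice of the four rows and four columns legitimate --- strictly increasing and with exactly the prescribed membership pattern in $V(\MP)$. This is routine when $m$ and $n$ are not too small, but when $m$ or $n$ is small, or a notch at a missing corner is deep, there may be too few columns or rows available, and the induced collection one extracts is then isomorphic to one of a short, explicit list of other small convex polyominoes (still missing a pair of opposite corners, respectively all four corners). One has to check that this finite list is closed under the reduction and that every polyomino on it has a Koszul relation pair; after that the argument is just a repeated application of Corollaries~\ref{inducedsubpolyomino} and~\ref{corner}.
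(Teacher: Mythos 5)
Your overall strategy is the same as the paper's: use Corollary~\ref{inducedsubpolyomino} to pass to an induced collection of cells supported on four columns and four rows near the missing corners, and verify a Koszul relation pair for the finitely many resulting small configurations by computer. However, the step you yourself flag as "the main obstacle" is not a routine piece of bookkeeping --- it is exactly where the substance of the proof lies, and your proposal leaves it unresolved. To induce the seven-cell configuration (the $3\times 3$ block of cells minus two opposite corner cells) you need two columns $c_2<c_3$ whose vertices appear both on the bottom line $y=1$ and on the top line $y=n$, i.e. two columns in $[i_1,i_2]\cap[i_3,i_4]$ in the notation of Figure~\ref{shape}, and similarly two rows; for a convex polyomino whose rows drift sideways as one goes up, this intersection can consist of a single column or be empty (and in the opposite-corner case one can even have $m=3$ or $n=3$, so four distinct columns do not exist and Corollary~\ref{corner} does not apply as stated). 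So the intended seven-cell (resp.\ five-cell cross) subconfiguration simply cannot be extracted in general, and these degenerate situations are not a marginal afterthought but the generic difficulty.

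The paper resolves this by a choice that always makes sense: in the opposite-corner case it induces on the extreme columns and rows $1,2,m-1,m$ and $1,2,n-1,n$, obtaining the staircase configuration of Figure~\ref{stairs} in which three cells are always present and four further cells appear according to whether $i_1=2$, $j_1=2$, $i_4=m-1$, $j_4=n-1$; all of these finitely many possibilities are checked by computer. In the four-corner case it splits according to whether $[i_3,i_4]\subset[i_1,i_2]$ and $[j_3,j_4]\subset[j_1,j_2]$ (giving the cross of Figure~\ref{convex}) or not (giving the three configurations of Figure~\ref{cases}, according to $i_1<i_4$, $i_1=i_4$, $i_4<i_1$), again all verified computationally. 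Your appeal to "a short, explicit list of other small convex polyominoes, still missing a pair of opposite corners, respectively all four corners" is unsubstantiated and, as stated, not even correct in form: the fallback induced collections need not be polyominoes at all (they may be disconnected collections of cells, e.g. three cells meeting only in vertices along a diagonal), and there is no "reduction" under which closure of the list would make sense --- what is needed is precisely the identification of the list and the verification, for each member, of a Koszul relation pair. Until that enumeration and check are carried out (or replaced by the paper's explicit choices), the proof is incomplete.
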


\begin{proof}
Let us first assume that $(1,1)$ and $(m,n)$ do not belong to $V(\MP)$, but $(1,n)$ and $(m,1)$ belong to $V(\MP)$. 
The collection of cells  $\MP_1$ induced by the rows $1,2,m-1,m$ and the columns $1,2,n-1,n$  is  shown in Figure~\ref{stairs}. All the light colored cells, some of them or none of them are present according to whether or not all, some or none of the equations $i_1=2$, $j_1=2$, $i_4=m-1$  and $j_4=n-1$ hold. For example, if $i_1=2$, $j_1\neq 2$, $i_4=m-1$ and $j_4\neq n-1$, then the light colored  cells $[(2,1),(3,2)]$ and $[(2,3),(3,4)]$ belong $\MP_1$ and the other  two light colored cells do not belong to $\MP_1$.

\begin{figure}[hbt]
\begin{center}
\psset{unit=0.9cm}
\begin{pspicture}(4.5,-1)(4.5,3.5)
{
\pspolygon[style=fyp,fillcolor=light](4.8,-1)(4.8,0)(5.8,0)(5.8,-1)
\pspolygon[style=fyp,fillcolor=light](2.8,1)(2.8,2)(3.8,2)(3.8,1)
\pspolygon[style=fyp,fillcolor=light](3.8,0)(3.8,1)(4.8,1)(4.8,0)
\pspolygon[style=fyp](2.8,0)(2.8,1)(3.8,1)(3.8,0)
\pspolygon[style=fyp](3.8,-1)(3.8,0)(4.8,0)(4.8,-1)
\pspolygon[style=fyp](3.8,1)(3.8,2)(4.8,2)(4.8,1)
\pspolygon[style=fyp](4.8,0)(4.8,1)(5.8,1)(5.8,0)
}
\end{pspicture}
\end{center}
\caption{}\label{stairs}
\end{figure}

It can easily be checked that the ideal  $I_{\MP_1}$  displayed in Figure~\ref{stairs}  has a Koszul relation pairs in all possible cases, and so does $I_\MP$ by Corollary~\ref{inducedsubpolyomino}.

Next, we assume that none of the four corners  $(1,1)$, $(1,n)$, $(m,1)$ and $(m,n)$  belong to  $\MP$. In the following arguments we refer to Figure~\ref{shape}. In the first case suppose $[i_3,i_4]\subset [i_1,i_2]$ and 
$[j_3,j_4]\subset [j_1,j_2]$.  Then the collection of cells induced by the columns $2,i_3,i_4,m-1$  and the rows $1,j_3,j_4,n$ is the polyomino displayed in Figure~\ref{convex} which has a Koszul relation pair as can be verified by computer. Thus $\MP$ has a Koszul relation pair. A similar argument applies if $[i_1,i_2]\subset [i_3,i_4]$ or $[j_1,j_2]\subset [j_3,j_4]$.

Next assume that $[i_3,i_4]\not\subset [i_1,i_2]$ or  $[j_3,j_4]\not\subset [j_1,j_2]$. By symmetry, we may discuss only 
$[i_3,i_4]\not\subset [i_1,i_2]$.
Then we may assume that $i_3<i_1$ and $i_4< i_2$.  We choose the columns $i_1,i_2,i_3,i_4$ and the rows $1,2,n-1,n$. Then the induced polyomino  by these rows and columns is $\MP_1$ if  $i_1<i_4$, $\MP_2$ if  $i_4=i_1$ and $\MP_3$ if  $i_4<i_1$; see Figure~\ref{cases}. In all three cases the corresponding induced polyomino ideal has a Koszul relation pair, and hence so does $I_\MP$.
\end{proof}

\begin{figure}[hbt]
\begin{center}
\psset{unit=0.9cm}
\begin{pspicture}(4.5,-1)(4.5,3.5)
\rput(-6,1){
\pspolygon[style=fyp,fillcolor=light](4.8,-1)(4.8,0)(5.8,0)(5.8,-1)
\pspolygon[style=fyp,fillcolor=light](2.8,1)(2.8,2)(3.8,2)(3.8,1)
\pspolygon[style=fyp,fillcolor=light](3.8,0)(3.8,1)(4.8,1)(4.8,0)
\pspolygon[style=fyp,fillcolor=light](2.8,0)(2.8,1)(3.8,1)(3.8,0)
\pspolygon[style=fyp,fillcolor=light](3.8,-1)(3.8,0)(4.8,0)(4.8,-1)
\pspolygon[style=fyp, fillcolor=light](3.8,1)(3.8,2)(4.8,2)(4.8,1)
\pspolygon[style=fyp, fillcolor=light](4.8,0)(4.8,1)(5.8,1)(5.8,0)
\rput(4,-2){$i_1<i_4$}
}

\rput(-0.5,1){
\pspolygon[style=fyp,fillcolor=light](4.8,-1)(4.8,0)(5.8,0)(5.8,-1)
\pspolygon[style=fyp,fillcolor=light](3.8,0)(3.8,1)(4.8,1)(4.8,0)
\pspolygon[style=fyp, fillcolor=light](3.8,1)(3.8,2)(4.8,2)(4.8,1)
\pspolygon[style=fyp, fillcolor=light](4.8,0)(4.8,1)(5.8,1)(5.8,0)
\rput(5,-2){$i_4=i_1$}
}

\rput(5.5,1){
\pspolygon[style=fyp,fillcolor=light](2.8,0)(2.8,1)(3.8,1)(3.8,0)
\pspolygon[style=fyp,fillcolor=light](4.8,-1)(4.8,0)(5.8,0)(5.8,-1)
\pspolygon[style=fyp,fillcolor=light](2.8,1)(2.8,2)(3.8,2)(3.8,1)
\pspolygon[style=fyp,fillcolor=light](3.8,0)(3.8,1)(4.8,1)(4.8,0)
\pspolygon[style=fyp,fillcolor=light](2.8,0)(2.8,1)(3.8,1)(3.8,0)
\pspolygon[style=fyp, fillcolor=light](4.8,0)(4.8,1)(5.8,1)(5.8,0)
\rput(4.5,-2){$i_4<i_1$}
}
\end{pspicture}
\end{center}
\caption{}\label{cases}
\end{figure}

\begin{Lemma}
\label{threecorners}
Let $\MP$ be a convex polyomino, and let $[(1,1),(m,n)]$ be the smallest interval with the property that $V(\MP)\subset [(1,1),(m,n)]$. Suppose $\MP$ misses three corners, say $(1,n),(m,1),(m,n)$, and suppose that  $i_2<m-1$ and  $j_2<n-1$, or $i_2=m-1$ and $j_2<j_4$, or  $j_2=n-1$ and $i_2< i_4$. Then $I_\MP$ has a Koszul relation pair and hence is not linearly related.
\end{Lemma}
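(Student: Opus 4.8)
The plan is to argue just as in Lemmas~\ref{begin} and~\ref{opposite}. In each of the three cases of the hypothesis I will exhibit, for a well‑chosen set of rows and columns, an induced collection of cells $\MP'$ of $\MP$ belonging to a short explicit list of configurations, each of which is checked on a computer (with CoCoA or Singular, or via Proposition~\ref{bh}) to admit a Koszul relation pair; then Corollary~\ref{inducedsubpolyomino} transports such a pair to $I_\MP$, so $\MP$ is not linearly related. For the set‑up: since exactly the corners $(1,n),(m,1),(m,n)$ are absent, $\MP$ contains $(1,1)$, so in the notation of Figure~\ref{shape} one has $i_1=j_1=1$, while $i_3,j_3\geq 2$ and $i_2,i_4\leq m-1$, $j_2,j_4\leq n-1$. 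We may assume that $\MP$ already has the shape of Figure~\ref{shape}, i.e.\ $(2,2),(m-1,2),(2,n-1),(m-1,n-1)\in V(\MP)$ and hence $[(2,2),(m-1,n-1)]\subseteq V(\MP)$, for otherwise Corollary~\ref{corner} already produces a Koszul relation pair; convexity then forces the present vertices of $\MP$ on row~$1$, row~$n$, column~$1$, column~$m$ to be exactly those in columns $[1,i_2]$, columns $[i_3,i_4]$, rows $[1,j_2]$, rows $[j_3,j_4]$, respectively. Disposing of the cases $m\leq 3$ or $n\leq 3$ directly, we may assume that the quadruples of rows and columns chosen below consist of four distinct integers. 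Finally, Case~(3) follows from Case~(2) by passing to the transposed polyomino $\MP^{\mathrm{t}}$, which lies in the same class (with the roles of the $i_k$ and $j_k$ interchanged) and for which a Koszul relation pair exists if and only if one exists for $\MP$; so it suffices to treat Cases~(1) and~(2).

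In Case~(1), where $i_2<m-1$ and $j_2<n-1$, I would take the columns $1,2,m-1,m$ and the rows $1,2,n-1,n$. Then $i_2<m-1$ and $j_2<n-1$ guarantee that the induced collection $\MP'$ always omits the vertices at $(1,n-1),(1,n),(m-1,1),(m,1),(m,n)$ and always contains those at $(1,1),(1,2),(2,1),(2,2),(2,n-1),(m-1,2),(m-1,n-1)$; the only further vertices that can occur are those at $(2,n),(m-1,n),(m,2),(m,n-1)$, according to whether $i_3=2$, $i_4=m-1$, $j_3=2$, $j_4=n-1$. Thus $\MP'$ lies inside a $4\times 4$ grid and in every subcase contains the two cells realizing the $2$‑minors $f=[12|12]$ and $g=[23|23]$. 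In the generic subcase (none of the four equalities holds) $\MP'$ consists of exactly these two cells, which share only the vertex $x_{22}$, so $I_{\MP'}=(f,g)$ is a complete intersection of two quadrics and $f$ and $g$ form a Koszul relation pair; in each of the remaining subcases a direct computation of $\Syz_1(I_{\MP'})$ again exhibits a Koszul relation pair. Corollary~\ref{inducedsubpolyomino} then gives the claim.

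In Case~(2), where $i_2=m-1$ and $j_2<j_4$, I would again use the columns $1,2,m-1,m$, but now the rows $1,\,j_2,\,j_2+1,\,j_4$ (with an obvious modification, e.g.\ replacing $j_2+1$ by $2$, when $j_4=j_2+1$). Since $j_2<j_4\leq n-1$, none of these rows is $n$, so $\MP'$ does not involve $i_3$ or $i_4$ and depends only on the position of $j_3$ relative to $j_2$. Running through the subcases $j_3\leq j_2$, $j_3=j_2+1$ and $j_3\geq j_2+2$, and using $i_2=m-1$, one checks that in the first two $\MP'$ is a convex polyomino inside a $4\times 4$ grid missing precisely the two opposite corners $(1,4)$ and $(4,1)$, so Lemma~\ref{opposite} applies, while in the last subcase the naive window is already linearly related and one must choose the rows and columns differently and verify directly on a computer that the new $\MP'$ has a Koszul relation pair. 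Corollary~\ref{inducedsubpolyomino} finishes the argument, and Case~(3) follows by transposition as above.

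The main obstacle is exactly this bookkeeping over subcases: one must pin down, in each branch, which vertices of the chosen $4\times 4$ window belong to $V(\MP')$ — convexity together with $[(2,2),(m-1,n-1)]\subseteq V(\MP)$ supplies the ``interior'' vertices, whereas the absent ones must fall precisely into the staircase gaps governed by $i_2,i_3,i_4,j_2,j_3,j_4$ — and one must ensure both that the chosen rows and columns stay distinct and, crucially, that none of the resulting windows is an already linearly related polyomino (which is what makes the naive choice fail in Case~(2)); this forces one to adapt the choice of rows and columns and to enlarge the computer‑checked list of configurations where needed.
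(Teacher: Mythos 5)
Your reduction to the shape of Figure~\ref{shape}, the transposition argument for Case~(3), and your Case~(1) window are workable (though in Case~(1) the paper does something cleaner: it takes only the columns $1,2,m-1$ and rows $1,2,n-1$, i.e.\ it omits column $m$ and row $n$ altogether, so the induced configuration is insensitive to $i_3,i_4,j_3,j_4$ and is uniformly the two diagonal cells of Figure~\ref{two}, whose minors form a regular sequence of length $2$ --- no list of subcase computer checks is needed). The genuine gap is in your Case~(2). Your rows $1,\,j_2,\,j_2+1,\,j_4$ are anchored at the left protrusion, and in the subcase $j_3\geq j_2+2$ --- which you yourself single out --- the induced configuration is linearly related, hence yields no Koszul pair; at that point you only assert that one must ``choose the rows and columns differently and verify directly on a computer'', without exhibiting the new choice. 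But producing that window is exactly the content of the lemma in this case, and the subcase is not vacuous: for $m=4$, $n=6$ take the convex polyomino whose rows $1,2$ occupy columns $1$--$3$, row $3$ columns $2$--$3$, rows $4,5$ columns $2$--$4$, row $6$ columns $2$--$3$; then $i_2=3=m-1$, $j_2=2<j_4=5$, $j_3=4=j_2+2$, and your window (rows $1,2,3,5$) gives an $L$-shaped polyomino (plus an isolated vertex) which satisfies condition (c)(i) of Theorem~\ref{main} and so is linearly related.

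The repair is to anchor the two upper rows at the top of the right protrusion instead of at $j_2$: the paper chooses the columns $1,2,m-1,m$ and the rows $1,2,j_4-1,j_4$. Since the right protrusion always satisfies $j_3\leq j_4-1$, the vertices $(m,j_4-1)$ and $(m,j_4)$ lie in $V(\MP)$, while $(m,1)$ and $(1,j_4)$ do not, and $(1,1),(2,1),(m-1,1)$ (using $i_2=m-1$) and $(1,2)$ do; hence, whatever the relative position of $j_2$ and $j_3$, the induced window is a convex polyomino on the full $4\times 4$ box missing exactly the two opposite corners, so Lemma~\ref{opposite} applies uniformly, with no residual subcase. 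A further small inaccuracy in your Case~(2): in the subcases $j_3\leq j_2$ and $j_3=j_2+1$ your window also misses the non-corner vertex $(1,3)$ (and possibly $(4,2)$), so it does not miss ``precisely'' the two opposite corners; this happens to be harmless because Lemma~\ref{opposite} only constrains the four corners, but the statement as written is not what your window produces, and the ``obvious modification'' for $j_4=j_2+1$ is likewise left unchecked.
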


\begin{proof}
We proceed as in the proofs of the previous lemmata. In the case that $i_2<m-1$ and  $j_2<n-1$, we consider the collection of cells $\MP'$ induced by the columns $1,2,m-1$ and the rows $1,2,n-1$.  This collection of cells $\MP'$  is depicted in Figure~\ref{two}. It is easily seen that $I_{\MP'}$ is generated by a regular sequence of length $2$, which is a Koszul relation pair. In the case that $i_2=m-1$ and $j_2< j_4$ we choose the columns $1,2,m-1,m$ and the rows $1,2,j_4-1,j_4$. The polyomino $\MP''$ induced by this choice of rows and columns has two opposite missing corners, hence, by Lemma~\ref{opposite}, it has a Koszul pair. The case  $j_2=n-1$ and $i_2< i_4$ is symmetric. In both cases the induced polyomino ideal has a Koszul relation pair. Hence in all three cases $I_\MP$ itself has a Koszul relation pair.
\begin{figure}[hbt]
\begin{center}
\psset{unit=0.9cm}
\begin{pspicture}(3.5,-1)(3.5,1)
\pspolygon[style=fyp,fillcolor=light](2.8,-1)(2.8,0)(3.8,0)(3.8,-1)
\pspolygon[style=fyp,fillcolor=light](3.8,0)(3.8,1)(4.8,1)(4.8,0)

\end{pspicture}
\end{center}
\caption{}\label{two}
\end{figure}
\end{proof}

\begin{proof}[Proof of Theorem~\ref{main}]
Implication (a)$\Rightarrow$(b) is obvious. Implication (b)$\Rightarrow$(c) follows by Corollary~\ref{corner}, Lemma~\ref{opposite},
and Lemma~\ref{threecorners}.

It remains to  prove (c)$\Rightarrow$(a). Let $\MP$ be a convex polyomino which satisfies one of the conditions (i)--(iii).
We have to show that $\MP$ is linearly related. By Corollary~\ref{no}, we only need to prove that $\beta_{14}(I_{\MP})=0.$
Viewing $K[\MP]$ as a semigroup ring $K[H],$ it follows that one has to check that $\beta_{1h}(I_{\MP})=0$ for all
$h\in H$ with $|h|=4.$ The main idea of this proof is to use Corollary~\ref{refinement}.

Let $h=h_1h_2h_3h_4$ with $j_q=s_{i_q}t_{i_q}$ for $1\leq q\leq 4,$ and
$
i=\min_{q}\{i_q\}, k=\max_{q}\{i_q\}, j=\min_{q}\{j_q\}, \text{ and } \ell=\max_{q}\{j_q\}.
$
Therefore, all the points $h_q$ lie in the (possible degenerate) rectangle $\MQ$ of vertices
$(i,j), (k,j), (i,\ell),(k,\ell).$ If $\MQ$ is degenerate, that is, all the vertices of $Q$ are contained in a vertical or horizontal line segment in $\MP$, then $\beta_{1h}(I_{\MP})=0$ since in this case the simplicial complex $\Delta_h$ is just a simplex. Let us now consider $\MQ$ non-degenerate. If all the vertices of $\MQ$ belong to $\MP$, then the rectangle $\MQ$ is an induced subpolyomino of $\MP$. Therefore, by Corollary~\ref{refinement}, we have
$\beta_{1h}(I_{\MP})=\beta_{1h}(I_Q)=0$, the latter equality being true since $\MQ$ is linearly related.

Next, let us assume that some of the vertices of $\MQ$ do not belong to $\MP.$ As $\MP$ has one of the forms (i)--(iii), it follows that at most three verices of $\MQ$ do not belong to $\MP.$ Consequently, we have to analyze the following cases.

{\em Case 1.} Exactly one vertex of $\MQ$ does not belong to $\MP.$ Without loss of generality, we may assume that $(k,\ell)\notin \MP$ which implies that $k=m$ and $\ell=n.$ In this case, any relation in degree $h$ of $\MP$ is a relation of same degree of one of the polyominoes displayed in Figure~\ref{proof1corner}.

\begin{figure}[hbt]
\begin{center}
\psset{unit=0.9cm}
\begin{pspicture}(4.5,-6)(4.5,3.5)

\rput(-4,0.5){
\pspolygon[style=fyp,fillcolor=light](2.8,-1)(2.8,0)(3.8,0)(3.8,-1)
\pspolygon[style=fyp,fillcolor=light](2.8,0)(2.8,1)(3.8,1)(3.8,0)
\pspolygon[style=fyp,fillcolor=light](3.8,-1)(3.8,0)(4.8,0)(4.8,-1)
\pspolygon[style=fyp,fillcolor=light](5.8,-1)(5.8,0)(4.8,0)(4.8,-1)
\pspolygon[style=fyp,fillcolor=light](4.8,0)(4.8,1)(5.8,1)(5.8,0)
\pspolygon[style=fyp,fillcolor=light](2.8,1)(2.8,2)(3.8,2)(3.8,1)
\pspolygon[style=fyp,fillcolor=light](3.8,1)(3.8,2)(4.8,2)(4.8,1)
\pspolygon[style=fyp,fillcolor=light](3.8,0)(3.8,1)(4.8,1)(4.8,0)
}
\rput(0.3,-1){(a)}

\rput(4.5,0.5){
\pspolygon[style=fyp,fillcolor=light](2.8,-1)(2.8,0)(3.8,0)(3.8,-1)
\pspolygon[style=fyp,fillcolor=light](2.8,0)(2.8,1)(3.8,1)(3.8,0)
\pspolygon[style=fyp,fillcolor=light](3.8,-1)(3.8,0)(4.8,0)(4.8,-1)
\pspolygon[style=fyp,fillcolor=light](5.8,-1)(5.8,0)(4.8,0)(4.8,-1)
\pspolygon[style=fyp,fillcolor=light](4.8,0)(4.8,1)(5.8,1)(5.8,0)
\pspolygon[style=fyp,fillcolor=light](2.8,1)(2.8,2)(3.8,2)(3.8,1)
\pspolygon[style=fyp,fillcolor=light](3.8,0)(3.8,1)(4.8,1)(4.8,0)
}
\rput(8.3,-1){(b)}

\rput(-4,-4.5){
\pspolygon[style=fyp,fillcolor=light](2.8,-1)(2.8,0)(3.8,0)(3.8,-1)
\pspolygon[style=fyp,fillcolor=light](2.8,0)(2.8,1)(3.8,1)(3.8,0)
\pspolygon[style=fyp,fillcolor=light](3.8,-1)(3.8,0)(4.8,0)(4.8,-1)
\pspolygon[style=fyp,fillcolor=light](5.8,-1)(5.8,0)(4.8,0)(4.8,-1)
\pspolygon[style=fyp,fillcolor=light](2.8,1)(2.8,2)(3.8,2)(3.8,1)
\pspolygon[style=fyp,fillcolor=light](3.8,1)(3.8,2)(4.8,2)(4.8,1)
\pspolygon[style=fyp,fillcolor=light](3.8,0)(3.8,1)(4.8,1)(4.8,0)
}
\rput(0.3,-6){(c)}

\rput(4.5,-4.5){
\pspolygon[style=fyp,fillcolor=light](2.8,-1)(2.8,0)(3.8,0)(3.8,-1)
\pspolygon[style=fyp,fillcolor=light](2.8,0)(2.8,1)(3.8,1)(3.8,0)
\pspolygon[style=fyp,fillcolor=light](3.8,-1)(3.8,0)(4.8,0)(4.8,-1)
\pspolygon[style=fyp,fillcolor=light](5.8,-1)(5.8,0)(4.8,0)(4.8,-1)
\pspolygon[style=fyp,fillcolor=light](2.8,1)(2.8,2)(3.8,2)(3.8,1)
\pspolygon[style=fyp,fillcolor=light](3.8,0)(3.8,1)(4.8,1)(4.8,0)
}
\rput(8.3,-6){(d)}
\end{pspicture}
\end{center}
\caption{}\label{proof1corner}
\end{figure}
One may check with a computer algebra system that all polyominoes displayed in Figure~\ref{proof1corner} are linearly related, hence they do not have any relation in degree $h.$ Actually, one has to check only the shapes  (a), (b), and
(d) since the polyomino displayed in (c) is isomorphic to that one from (b). Hence, $\beta_{1h}(I_{\MP})=0.$

{\em Case 2.} Two vertices of $\MQ$ do not belong to $\MP.$ We may assume that the missing vertices from $\MP$ are
$(i,\ell)$and $(k,\ell)$. Hence, we have $i=1,$ $k=m$, and $\ell=n.$ In this case, any relation in degree $h$ of $\MP$ is a relation of same degree of one of the polyominoes displayed in Figure~\ref{proof23corner} (a)--(c). Note that the polyominoes  (b) and (c) are isomorphic. One easily checks with the computer that all these polyominoes are linearly related, thus  $\beta_{1h}(I_{\MP})=0.$

\begin{figure}[hbt]
\begin{center}
\psset{unit=0.9cm}
\begin{pspicture}(4.5,-6)(4.5,3.5)

\rput(-4,0.5){
\pspolygon[style=fyp,fillcolor=light](2.8,-1)(2.8,0)(3.8,0)(3.8,-1)
\pspolygon[style=fyp,fillcolor=light](2.8,0)(2.8,1)(3.8,1)(3.8,0)
\pspolygon[style=fyp,fillcolor=light](3.8,-1)(3.8,0)(4.8,0)(4.8,-1)
\pspolygon[style=fyp,fillcolor=light](5.8,-1)(5.8,0)(4.8,0)(4.8,-1)
\pspolygon[style=fyp,fillcolor=light](4.8,0)(4.8,1)(5.8,1)(5.8,0)
\pspolygon[style=fyp,fillcolor=light](3.8,1)(3.8,2)(4.8,2)(4.8,1)
\pspolygon[style=fyp,fillcolor=light](3.8,0)(3.8,1)(4.8,1)(4.8,0)
}
\rput(0.3,-1){(a)}

\rput(4.5,0.5){
\pspolygon[style=fyp,fillcolor=light](2.8,-1)(2.8,0)(3.8,0)(3.8,-1)
\pspolygon[style=fyp,fillcolor=light](3.8,-1)(3.8,0)(4.8,0)(4.8,-1)
\pspolygon[style=fyp,fillcolor=light](5.8,-1)(5.8,0)(4.8,0)(4.8,-1)
\pspolygon[style=fyp,fillcolor=light](4.8,0)(4.8,1)(5.8,1)(5.8,0)
\pspolygon[style=fyp,fillcolor=light](3.8,1)(3.8,2)(4.8,2)(4.8,1)
\pspolygon[style=fyp,fillcolor=light](3.8,0)(3.8,1)(4.8,1)(4.8,0)
}
\rput(8.3,-1){(b)}

\rput(-4,-4.5){
\pspolygon[style=fyp,fillcolor=light](2.8,-1)(2.8,0)(3.8,0)(3.8,-1)
\pspolygon[style=fyp,fillcolor=light](2.8,0)(2.8,1)(3.8,1)(3.8,0)
\pspolygon[style=fyp,fillcolor=light](3.8,-1)(3.8,0)(4.8,0)(4.8,-1)
\pspolygon[style=fyp,fillcolor=light](5.8,-1)(5.8,0)(4.8,0)(4.8,-1)
\pspolygon[style=fyp,fillcolor=light](3.8,1)(3.8,2)(4.8,2)(4.8,1)
\pspolygon[style=fyp,fillcolor=light](3.8,0)(3.8,1)(4.8,1)(4.8,0)
}
\rput(0.3,-6){(c)}

\rput(4.5,-4.5){
\pspolygon[style=fyp,fillcolor=light](2.8,-1)(2.8,0)(3.8,0)(3.8,-1)
\pspolygon[style=fyp,fillcolor=light](2.8,0)(2.8,1)(3.8,1)(3.8,0)
\pspolygon[style=fyp,fillcolor=light](3.8,-1)(3.8,0)(4.8,0)(4.8,-1)
\pspolygon[style=fyp,fillcolor=light](4.8,0)(4.8,1)(5.8,1)(5.8,0)
\pspolygon[style=fyp,fillcolor=light](3.8,1)(3.8,2)(4.8,2)(4.8,1)
\pspolygon[style=fyp,fillcolor=light](3.8,0)(3.8,1)(4.8,1)(4.8,0)
}
\rput(8.3,-6){(d)}
\end{pspicture}
\end{center}
\caption{}\label{proof23corner}
\end{figure}

{\em Case 3.} Finally, we assume that there are three vertices of $\MQ$ which do not belong to $\MP.$ We may assume that these vertices are $(i,\ell), (k,\ell),$ and $(k,j)$. In this case, any relation in degree $h$ of $\MP$ is a relation of same degree of the polyomino displayed in Figure~\ref{proof23corner} (d) which is linearly related as one may easily check with the computer. Therefore, we get again $\beta_{1h}(I_{\MP})=0.$
\end{proof}

\section{Polyomino ideals with linear resolution}

In this  final section, we classify all convex polyominoes which have a linear resolution and the convex stack polyominoes which are extremal Gorenstein.

\begin{Theorem}
\label{linear}
Let $\MP$ be a convex polyomino. Then the following conditions are equivalent:
\begin{enumerate}
\item[\em (a)] $I_\MP$ has a linear resolution;
\item[\em (b)] there exists a positive  integer $m$ such that $\MP$ is isomorphic to the polyomino with cells $[(i,i), (i+1,i+1)]$, $i=1,\ldots,m-1$.
\end{enumerate}
\end{Theorem}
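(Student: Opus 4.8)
The plan is to handle the two implications quite differently: (b)$\Rightarrow$(a) by pinning down the resolution of $I_\MP$ for the polyomino of (b), and (a)$\Rightarrow$(b) by running the induced-subpolyomino reduction already used in Lemmas~\ref{begin},~\ref{opposite} and~\ref{threecorners}.

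For (b)$\Rightarrow$(a): when $\MP$ is the polyomino of (b) it has only two rows of vertices, so every $2$-minor $x_{il}x_{kj}-x_{kl}x_{ij}$ whose four variables correspond to vertices of $\MP$ is an inner minor; hence $I_\MP$ is, up to the isomorphism allowed in the statement, the ideal of $2$-minors of a $2\times m$ matrix of indeterminates. Its minimal graded free resolution is the Eagon--Northcott complex, which is $2$-linear, so $\beta_{ij}(I_\MP)=0$ for $j\neq i+2$ and (a) holds. For an argument internal to the tools of this paper one may instead view $K[\MP]=K[H]$ and apply Proposition~\ref{bh}: for this very special shape the squarefree divisor complexes $\Delta_h$, $h\in H$, are immediately described, and one checks that $\tilde H_i(\Delta_h,K)=0$ unless $i=|h|-2$, which is exactly the statement that $I_\MP$ has a linear resolution. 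A third route: by Proposition~\ref{hibiohsugi} the quadratic initial ideal of $I_\MP$ is the edge ideal of a graph whose complement is chordal, so it has a linear resolution by Fr\"oberg's theorem, hence so does $I_\MP$.

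For (a)$\Rightarrow$(b) the key observation is that a linear resolution is inherited by induced collections of cells: if $I_\MP$ has a linear resolution then $\beta_{ij}(I_\MP)=0$ for $j\neq i+2$, so by Corollary~\ref{inducedsubpolyomino} also $\beta_{ij}(I_{\MP'})=0$ for $j\neq i+2$ for every induced collection of cells $\MP'$ of $\MP$, and since $I_{\MP'}$ is generated in degree $2$ this says $I_{\MP'}$ again has a linear resolution. Now a polyomino with a linear resolution is in particular linearly related, so Theorem~\ref{main} already forces $\MP$ to have the shape of Figure~\ref{shape} subject to one of (i)--(iii). I would then eliminate every surviving possibility except the polyominoes of (b) by exhibiting, in each case, a small induced collection of cells whose ideal is verified with CoCoA~\cite{Co} or Singular~\cite{DGPS} \emph{not} to have a linear resolution. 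The first obstruction is the $2\times2$ block of four cells, whose ideal is $I_2$ of a $3\times3$ matrix; its minimal free resolution is not linear (its last Betti number lies in degree $6$), so $\MP$ contains no $2\times2$ block of cells, i.e.\ $\MP$ is thin. The remaining obstructions are the minimal ``bent'' configurations still compatible with Theorem~\ref{main} --- suitable small $L$-shaped, $T$-shaped and short staircase-shaped polyominoes --- each of which is checked to have a non-linearly-resolved ideal; hence $\MP$ cannot turn. A thin convex polyomino that never turns is a single row or a single column of cells, that is, it is isomorphic to the polyomino of (b); and by (b)$\Rightarrow$(a) these all have linear resolutions, closing the equivalence.

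The step I expect to be the real obstacle is the last one in (a)$\Rightarrow$(b): verifying that the finite list of forbidden induced collections of cells is genuinely exhaustive, i.e.\ that every convex polyomino which survives Theorem~\ref{main} but is not of the form (b) really does contain one of them as an induced collection of cells. As in the proof of Lemma~\ref{opposite}, this requires working carefully through Figure~\ref{shape}, locating in each allowed configuration of present and absent corners either a $2\times2$ block of cells or a bend; it is a short but somewhat delicate case analysis on the positions $i_1,i_2,i_3,i_4$ and $j_1,j_2,j_3,j_4$.
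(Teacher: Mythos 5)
Your strategy is essentially the paper's: (b)$\Rightarrow$(a) is proved there exactly as in your first route, by identifying $I_\MP$ with the ideal of $2$-minors of a $2\times m$ matrix and quoting the Eagon--Northcott complex, and (a)$\Rightarrow$(b) is run through Corollary~\ref{inducedsubpolyomino}, reducing to small induced configurations whose ideals are not linearly resolved. The only real issue is the step you yourself flag as the obstacle: as submitted, the exhaustiveness of your list of forbidden induced configurations is asserted, not proved, and that verification is precisely where the paper's proof does its work. It closes the case analysis quite economically: assuming (after disposing of small cases by computer) $m\geq 4$ and, for contradiction, $n\geq 3$, if all four corners of $[(1,1),(m,n)]$ lie in $V(\MP)$ then the columns $1,2,m$ and rows $1,2,n$ induce the $2\times 2$ block of cells, while if a corner is missing then linear relatedness (Theorem~\ref{main}) gives the shape of Figure~\ref{shape}, and a suitable choice of three columns and three rows (e.g.\ columns $1,2,3$ and rows $a,j_1,j_1+1$, with the obvious variant when $j_1=2$) induces the L-shaped tromino of Figure~\ref{extremalstack}. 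Both obstructions are non-principal Gorenstein ideals, so neither can have a linear resolution; no computer verification and no T-shaped or staircase configurations are needed.

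If you prefer your local ``thin and never turns'' formulation, it can be made to work, but one point needs care: ``induced'' is defined through vertices, not cells, so at a bend of a thin convex polyomino the collection induced by the three relevant columns and rows need not be the L-tromino --- if the fourth vertex of the missing cell lies in $V(\MP)$ via a diagonally adjacent cell, the induced collection is the full $2\times 2$ block (whose ideal is $I_2$ of a $3\times 3$ matrix). With that observation, every bend produces one of your two genuine obstructions, and a convex polyomino with no $2\times 2$ block and no bend is indeed a single row or column, matching (b); but this short case analysis (or the paper's corner-based one) must actually be carried out for the proof to be complete. Your numerical claims are fine: the last Betti number of $I_2$ of a $3\times 3$ matrix does sit in degree $6$, and inheritance of linearity by induced collections via Corollary~\ref{inducedsubpolyomino} is exactly how the paper argues.
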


\begin{proof}
(b)\implies (a): If the polyomino is of the shape as described in (b), then $I_\MP$ is just the ideal of $2$-minors of a $2\times m$-matrix. It is well-known that the ideal of $2$-minors of such a matrix has a linear resolution. Indeed the Eagon-Northcott complex, whose chain maps are described by matrices with linear entries,   provides a free resolution of the ideal of maximal minors of any matrix of indeterminates,   see for example \cite[Page 600]{Ei}.

(a)\implies (b): We may assume   that $[(1,1),(m,n)]$  is the smallest interval containing $V(\MP)$.  We may further assume that $m\geq 4$ or $n\geq 4$. The few remaining cases can easily be checked with the computer. So let us assume that
$m\geq 4$. Then we have to show that $n=2$. Suppose that $n\geq 3$.  We first assume   that all the corners $(1,1),(1,n),(m,1)$ and $(m,n)$ belong to $V(\MP)$. Then the polyomino $\MP'$ induced by the columns  $1,2,m$ and the rows  $1,2,n$ is the polyomino which is displayed on the right of Figure~\ref{extremalstack}. The ideal $I_{\MP'}$ is a Gorenstein ideal, and hence it is does not have a linear resolution. Therefore, by Corollary~\ref{inducedsubpolyomino}, the ideal $I_\MP$ does not have a linear resolution as well, a contradiction.

Next assume that one of the corners, say $(1,1)$, is missing. Since $I_\MP$ has a linear a linear resolution, $I_\MP$ is linearly related and hence has a shape as indicated in Figure~\ref{shape}. Let $i_1$ and $j_1$ be the numbers as shown in Figure~\ref{shape}, and let $\MP'$ the polyomino of $\MP$ induced by the columns  $1,2,3$ and the rows $a,j_1,j_1+1$ where $a=1$ if $i_1=2$ and $a=2$ if $i_1>2, j_1>2$. If $j_1=2$ and $i_1>2,$ we let $\MP^\prime$ to be the polyomino induced by the columns $1,i_1,i_1+1$ and the rows $1,2,3.$ In any case, $\MP'$ is isomorphic to that one displayed on the left of Figure~\ref{extremalstack}. Since $I_{\MP'}$ is again a Gorenstein ideal, we conclude, as in the first case, that $I_\MP$ does not a have linear resolution, a contradiction.
\end{proof}

As mentioned in the introduction, polyomino ideals overlap with  join-meet  ideals of planar lattices. In the next result we show that the join-meet ideal of any
lattice has linear resolution if and only if it is a polyomino as described in Theorem~\ref{linear}. With methods different from those which are used in this
paper, the classification of join-meet  ideals with linear resolution  was first given  in \cite[Corollary 10]{ERQ}.

Let $L$ be a finite distributive lattice
%
%
\cite[pp.~118]{hibiredsbook}.
%
%
A {\em join-irreducible} element of $L$ is an element $\alpha \in L$
which is not a unique minimal element and which possesses the property that
$\alpha \neq \beta \vee \gamma$
for all $\beta, \, \gamma \in L \setminus \{\alpha\}$.
Let $P$ be the set of join-irreducible elements of $L$.
We regard $P$ as a poset (partially ordered set)
which inherits its ordering from that of $L$.
A subset $J$ of $P$ is called an {\em order ideal} of $P$
if $a \in J$, $b \in P$ together with $b \leq a$ imply $b \in J$.
In particular, the empty set of $P$ is an order ideal of $P$.
Let ${\mathcal J}(P)$ denote the set of order ideals of $P$,
ordered by inclusion.  It then follows that ${\mathcal J}(P)$ is a distributive
lattice.  Moreover, Birkhoff's fundamental structure theorem of finite
distributive lattices
%
%
\cite[Proposition 37.13]{hibiredsbook} 
%
%
guarantees that $L$  coincides with ${\mathcal J}(P)$.

Let $L = {\mathcal J}(P)$ be a finite distributive lattice
and $K[L] = K[ \, x_{\alpha} : \alpha \in L \, ]$ the polynomial ring
in $|L|$ variables over $K$.  The {\em join-meet ideal} $I_{L}$
of $L$ is the ideal of $K[L]$ which is generated by those binomials
\[
x_{\alpha}x_{\beta} - x_{\alpha \wedge \beta}x_{\alpha \vee \beta},
\]
where $\alpha, \, \beta \in L$ are incomparable in $L$.
It is known \cite{Hibi} that $I_{L}$ is a prime ideal and the quotient
ring $K[L]/I_{L}$ is normal and Cohen--Macaulay.
Moreover, $K[L]/I_{L}$ is Gorenstein if and only if $P$ is pure.
(A finite poset is {\em pure} if every maximal chain
(totally ordered subset) of $P$ has the same cardinality.)

Now, let $P = \{ \xi_{1}, \ldots, \xi_{d} \}$ be a finite poset, where
$i < j$ if $\xi_{i} < \xi_{j}$, and $L = {\mathcal J}(P)$.
A {\em linear extension} of $P$ is a permutation $\pi = i_{1} \cdots i_{d}$
of $[n] = \{ 1, \ldots, n \}$ such that $j < j'$ if $\xi_{i_{j}} < \xi_{i_{j'}}$.
A {\em descent} of $\pi = i_{1} \cdots i_{d}$ is an index $j$ with
$i_{j} > i_{j+1}$.  Let $D(\pi)$ denote the set of descents of $\pi$.
The {\em $h$-vector} of $L$ is the sequence
$h(L) = (h_{0}, h_{1}, \ldots, h_{d-1})$,
where $h_{i}$ is the number of permutations $\pi$
of $[n]$ with $|D(\pi)| = i$.
Thus, in particular, $h_{0} = 1$.
It follows from
%
%
\cite{BGS}
%
%
that the Hilbert series of $K[L]/I_{L}$
is of the form
\[
\frac{h_{0} + h_{1} \lambda + \cdots + h_{d-1} \lambda^{d-1}}{(1 - \lambda)^{d+1}}.
\]

We say that a finite distributive lattice $L = {\mathcal J}(P)$ is {\em simple}
if $L$ has no elements $\alpha$ and $\beta$ with $\beta < \alpha$ such that
each element $\gamma \in L \setminus \{\alpha, \beta\}$
satisfies either $\gamma < \beta$ or $\gamma > \alpha$.
In other words, $L$ is simple if and only if $P$ possesses no element
$\xi$ for which every $\mu \in P$ satisfies either $\mu \leq \xi$ or
$\mu \geq \xi$.

\begin{Theorem}
\label{hibione}
Let $L = {\mathcal J}(P)$ be a simple finite distributive lattice.
Then the join-meet ideal $I_{L}$ has a linear resolution
if and only if $L$ is of the form shown in Figure~\ref{plane}.

\begin{figure}[bht]
\begin{center}
\psset{unit=0.4cm}
\begin{pspicture}(-10.3,-2)(4,9)


\rput(1,0){
\pspolygon(-11,1)(-9,-1)(-7,1)(-9,3)
\pspolygon(-7,1)(-5,3)(-7,5)(-9,3)
\psline[linestyle=dotted](-5,3)(-3,5)
\psline[linestyle=dotted](-7,5)(-5,7)
\pspolygon(-3,5)(-5,7)(-3,9)(-1,7)
\rput(-11,1){$\bullet$}
\rput(-9,-1){$\bullet$}
\rput(-7,1){$\bullet$}
\rput(-9,3){$\bullet$}
\rput(-5,3){$\bullet$}
\rput(-7,5){$\bullet$}
\rput(-3,5){$\bullet$}
\rput(-5,7){$\bullet$}
\rput(-3,9){$\bullet$}
\rput(-1,7){$\bullet$}
}

\end{pspicture}
\end{center}
\caption{}\label{plane}
\end{figure}

\end{Theorem}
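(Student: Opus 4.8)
The plan splits into identifying the lattices of Figure~\ref{plane} and then proving the ``only if'' part by a regularity computation. First, the lattice displayed in Figure~\ref{plane} is $L=\mathcal J(P)$, where $P$ is the disjoint union, as a poset, of a one–element poset and a chain $C$ on $r$ elements with $r\ge 1$; equivalently $L$ is isomorphic to the product of a two–element chain and an $(r+1)$–element chain, and under the resulting embedding of $L$ into $\NN^2$ it is precisely the vertex set of the polyomino of Theorem~\ref{linear}(b), so that $I_L$ is the ideal of $2$–minors of a generic $2\times(r+1)$–matrix. Hence the implication (b)$\Rightarrow$(a) will be immediate from Theorem~\ref{linear}, or directly from the Eagon--Northcott resolution of the ideal of maximal minors of a $2\times(r+1)$–matrix, whose differentials have linear entries.

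For (a)$\Rightarrow$(b) I would argue as follows. Assume $I_L$ has a linear resolution; since $I_L$ is generated in degree $2$ this is equivalent to $\reg(K[L]/I_L)=1$. Now I would invoke the known value of the regularity of a join–meet ideal: writing $\ell(P)$ for the number of elements of a longest chain of $P$, one has $\reg(K[L]/I_L)=|P|-\ell(P)$. This is the computation of the $a$–invariant of the Cohen--Macaulay Hibi ring $K[L]/I_L$; it is recorded in \cite{ERQ} when $L$ is planar, and in general it follows from the Hilbert–series description in \cite{BGS} together with the Cohen--Macaulayness recalled above. Consequently $|P|=\ell(P)+1$: fixing a longest chain $C\subseteq P$, we have $P=C\cup\{x\}$ for a single element $x\notin C$.

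It then remains to use the simplicity of $L$ to force $x$ to be incomparable to every element of $C$, which pins down $P$ and hence $L$. If $x$ were comparable to all of $C$, then $C\cup\{x\}=P$ would be a chain on $\ell(P)+1$ elements, contradicting the maximality of $C$; so $x$ is incomparable to some element of $C$. Suppose in addition $x$ is comparable to some element $c\in C$. If $c<x$, then the least element $c_1$ of $C$ satisfies $c_1\le c<x$ and $c_1\le c'$ for all $c'\in C$; if $x<c$, then the greatest element of $C$ dominates $x$ and all of $C$. In either case some element of $C$ is comparable to every element of $P$, contradicting that $L$ is simple. Hence $x$ is incomparable to every element of $C$, i.e.\ $P$ is the disjoint union of $\{x\}$ and the chain $C$, with $|C|=\ell(P)\ge 1$ (the inequality because a simple $L$ forces $|P|\ge 2$). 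Thus $L=\mathcal J(P)$ is the lattice of Figure~\ref{plane}.

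Granting the regularity formula for Hibi rings, the argument is elementary, and I expect the only real (if modest) work to lie in the last paragraph: using simplicity systematically to eliminate every poset with $|P|=\ell(P)+1$ other than the disjoint union of a point and a chain. If one prefers to stay inside the polyomino framework of the paper and not quote that formula, the proof becomes longer: a planar $L$ satisfies $I_L=I_{\mathcal P}$ for a convex collection of cells $\mathcal P$, one applies Theorem~\ref{linear} when $\mathcal P$ is a polyomino, and the remaining cases (non‑planar $L$, and planar $L$ whose cell collection is disconnected, e.g.\ two diamonds meeting in a single vertex) are dispatched by exhibiting an interval of $L$ whose join–meet ideal is not linearly related --- Betti numbers being monotone along intervals of $L$ since $\{\,h_J: A\subseteq J\subseteq B\,\}$ spans a homologically pure subsemigroup of the Hibi semigroup (Corollaries~\ref{homologicallypure} and~\ref{inducedsubpolyomino}). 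In that approach the delicate point is assembling the finite list of forbidden small lattices, which is why the regularity route seems preferable.
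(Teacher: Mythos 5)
Your proposal is correct, but its ``only if'' half follows a different route from the paper's. The paper never quotes a closed regularity formula: it reduces (exactly as you do) to the statement that $I_L$ has a linear resolution iff $h(L)=(1,q,0,\dots,0)$, i.e.\ iff every linear extension of $P$ has at most one descent, and then argues purely combinatorially --- no three-element antichain, Dilworth's theorem to write $P=C\cup C'$ as two disjoint chains, and simplicity to produce a linear extension with two descents whenever $|C|,|C'|\ge 2$ --- so that one chain must be a singleton; the ``if'' direction is just the observation that the poset of Figure~\ref{plane} admits no linear extension with two descents. You instead outsource the combinatorial core to the regularity formula $\reg(K[L]/I_L)=|P|-\ell(P)$, get $P=C\cup\{x\}$ with $C$ a longest chain, and finish with a clean simplicity argument forcing $x$ to be incomparable to all of $C$; your ``if'' direction via the identification $L\cong C_2\times C_{r+1}$ and the Eagon--Northcott resolution of the $2\times(r+1)$ minors is also fine and matches Theorem~\ref{linear}. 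This is legitimate --- indeed the paper itself notes that the classification was first obtained in \cite{ERQ} by different methods, and your route is essentially that one; it is shorter, while the paper's is self-contained modulo \cite{BGS} and \cite{Dil}. The only point you should not wave away: deducing the formula ``in general from \cite{BGS} plus Cohen--Macaulayness'' gives $\reg(K[L]/I_L)=\max_\pi|D(\pi)|$, and you still need the (easy but not automatic) lemma that this maximum equals $|P|-\ell(P)$ --- at least the inequality $\max_\pi|D(\pi)|\ge |P|-\ell(P)$, obtained e.g.\ by listing the elements by height with each height class in decreasing label order --- unless you simply cite the ERQ theorem outright. Your fallback polyomino-based argument is only a sketch (the finite list of forbidden induced sublattices is not assembled), but since you present the regularity route as the actual proof, this does not affect correctness.
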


\begin{proof}
Since $I_L$ is generated in degree $2$, it follows that $I_L$  has a linear resolution if and only if the regularity of $K[L]/I_{L}$  is equal to $1$. We may assume that $K$ is infinite.  Since $K[L]/I_{L}$ is Cohen--Macaulay, we may divide by a regular sequence of linear forms to obtain a $0$-dimensional $K$-algebra $A$ with $\reg A=\reg K[L]/I_{L}$ whose $h$-vector coincides with that of $\reg K[L]/I_{L}$. Since $\reg A=\max\{i\: A_i\neq 0\}$ (see for example \cite[Exercise 20.18]{Ei}), it follows that
 $I_{L}$ has a linear
resolution if and only if the $h$-vector of $L$ is of the form
$h(L) = (1, q, 0, \ldots, 0)$,
where $q \geq 0$ is an integer.
Clearly, if $P$ is a finite poset of Figure~\ref{plane}, then $|D(\pi)| \leq 1$
for each linear extension $\pi$ of $P$.  Thus $I_{L}$ has a linear resolution.

Conversely, suppose that $I_{L}$ has a linear resolution.  In other words,
one has $|D(\pi)| \leq 1$ for each linear extension $\pi$ of $P$.
Then $P$ has no three-element clutter.
(A {\em clutter} of $P$ is a subset $A$ of $P$ with the property that no two elements belonging to $A$
are comparable in $P$.)  Since $L = {\mathcal J}(P)$ is simple, it follows that
$P$ contains a two-element clutter.  Hence Dilworth's theorem
%
%
\cite{Dil}
%
%
says that $P = C \cup C'$, where $C$ and $C'$ are chains of $P$
with $C \cap C' = \emptyset$.  Let $|C| \geq 2$ and $|C'| \geq 2$.
Let $\xi \in C$ and $\mu \in C'$ be minimal elements of $P$.
Let $\xi' \in C$ and $\mu' \in C'$ be maximal elements of $P$.
Since $L = {\mathcal J}(P)$ is simple,
it follows that $\xi \neq \mu$ and $\xi' \neq \mu'$.
Thus there is a linear extension $\pi$ of $P$ with $|D(\pi)| \geq 2$.
Thus $I_{L}$ cannot have a linear resolution.  Hence either
$|C| = 1$ or $|C'| = 1$, as desired.
\end{proof}

A Gorenstein ideal can never have a linear resolution, unless it is a principal ideal. However, if the resolution is as much linear as possible, then it is called
extremal Gorenstein. Since polyomino ideals are generated in degree $2$ we restrict ourselves in the following definition of extremal Gorenstein ideals to graded ideals generated in degree
$2$.

Let $S$ be a polynomial ring over field, and $I\subset S$ a graded ideal which is not principal and is generated in degree $2$.
Following \cite{Sch} we say that $I$ is
an {\em extremal Gorenstein ideal} if $S/I$ is Gorenstein and
if the shifts of the graded minimal free resolution are
\[
-2-p-1, -2-(p-1), -2-(p-2), \ldots, - 3, -2,
\]
where $p$ is the projective dimension of $I$.

With similar arguments as in the proof of Theorem~\ref{hibione}, we see that $I$
is an extremal Gorenstein ideal if and only if $I$ is a Gorenstein ideal and $\reg S/I=2$, and that this is the case
if and only if $S/I$ is Cohen--Macaulay and the $h$-vector of $S/I$ is of the form
\[
h(L) = (1, q, 1, 0, \ldots, 0),
\]
where $q > 1$ is an integer.

\medskip
In the following theorem we classify all convex stack polyominoes $\MP$ for which $I_\MP$ is extremal Gorenstein. Convex stack polyominoes have been considered in
\cite{Q}. In that paper Qureshi characterizes those convex stack polyominoes $\MP$ for which $I_\MP$ is Gorenstein.

Let $\MP$ be a polyomino.  We may assume that  $[(1,1),(m,n)]$ is the smallest
interval containing $V(\MP)$. Then $\MP$ is called a {\em stack polyomino} if it is column convex and for $i=1,.\ldots,m-1$ the cells $[(i,1),(i+1,2)]$ belong to
$\MP$. Figure~\ref{stackpolyomino} displays stack polyominoes -- the right polyomino is convex, the left is not. The number of  cells of the bottom row  is called the {\em width} of $\MP$ and the  number of cells in a maximal column is called the {\em height} of $\MP$.

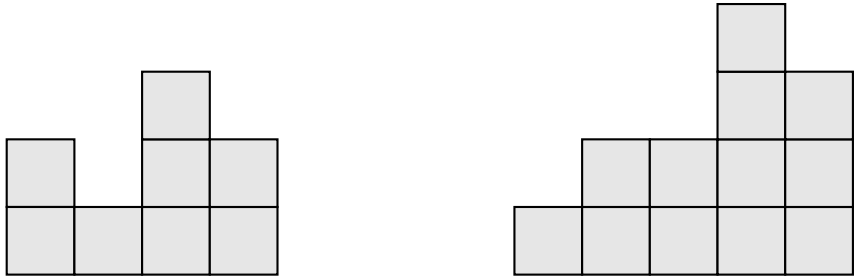
\begin{figure}[hbt]
\begin{center}
\psset{unit=0.9cm}
\begin{pspicture}(5,-0.5)(5,5)
\rput(-5,0){
\pspolygon[style=fyp,fillcolor=light](4,2)(4,3)(5,3)(5,2)
\pspolygon[style=fyp,fillcolor=light](4,1)(4,2)(5,2)(5,1)
\pspolygon[style=fyp,fillcolor=light](5,1)(5,2)(6,2)(6,1)
\pspolygon[style=fyp,fillcolor=light](6,1)(6,2)(7,2)(7,1)
\pspolygon[style=fyp,fillcolor=light](7,1)(7,2)(8,2)(8,1)
\pspolygon[style=fyp,fillcolor=light](6,2)(6,3)(7,3)(7,2)
\pspolygon[style=fyp,fillcolor=light](7,2)(7,3)(8,3)(8,2)
\pspolygon[style=fyp,fillcolor=light](6,3)(6,4)(7,4)(7,3)
}
\rput(3.5,1){
\pspolygon[style=fyp,fillcolor=light](3,0)(3,1)(4,1)(4,0)
\pspolygon[style=fyp,fillcolor=light](4,0)(4,1)(5,1)(5,0)
\pspolygon[style=fyp,fillcolor=light](5,0)(5,1)(6,1)(6,0)
\pspolygon[style=fyp,fillcolor=light](6,0)(6,1)(7,1)(7,0)
\pspolygon[style=fyp,fillcolor=light](7,0)(7,1)(8,1)(8,0)
\pspolygon[style=fyp,fillcolor=light](4,1)(4,2)(5,2)(5,1)
\pspolygon[style=fyp,fillcolor=light](5,1)(5,2)(6,2)(6,1)
\pspolygon[style=fyp,fillcolor=light](6,1)(6,2)(7,2)(7,1)
\pspolygon[style=fyp,fillcolor=light](7,1)(7,2)(8,2)(8,1)
\pspolygon[style=fyp,fillcolor=light](6,2)(6,3)(7,3)(7,2)
\pspolygon[style=fyp,fillcolor=light](7,2)(7,3)(8,3)(8,2)
\pspolygon[style=fyp,fillcolor=light](6,3)(6,4)(7,4)(7,3)
}
\end{pspicture}
\end{center}
\caption{Stack polyominoes}\label{stackpolyomino}
\end{figure}

Let $\MP$ be a convex stack polyomino. Removing the first $k$ bottom rows of cells of $\MP$ we obtain again a convex stack polyomino which we denote by $\MP_k$. We also set $\MP_0=\MP$. Let $h$ be the height of the polymino, and let   $1<k_1<k_2 < \cdots <k_r <h$ be the numbers with the property that $\width(\MP_{k_i})<\width(\MP_{k_{i-1}})$. Furthermore, we set $k_0=1$. For example, for the convex stack polyomino in Figure~\ref{stackpolyomino} we have $k_1=1$, $k_2=2$ and $k_3=3$.

With the terminology and notation introduced,  the characterization of Gorenstein convex stack polyominoes is given in the following theorem.

\begin{Theorem}[Qureshi]
\label{gorensteinstack}
Let $\MP$ be a convex stack polyomino of height $h$. Then the following conditions are equivalent:
\begin{enumerate}
\item[{\em (a)}] $I_\MP$ is a Gorenstein ideal.
\item[{\em (b)}] $\width(\MP_{k_i})=\height(\MP_{k_i})$ for $i=0,\ldots,r$.
\end{enumerate}
\end{Theorem}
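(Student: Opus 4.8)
The plan is to combine the toric picture of $K[\MP]$ with the description of the canonical module of a normal affine semigroup ring. By Theorem~\ref{ayesha}, $K[\MP]=K[H]$ is a normal Cohen--Macaulay domain, where $H\subset\ZZ^{m+n}$ is the semigroup generated by the monomials $s_it_j$ with $(i,j)\in V(\MP)$ and $[(1,1),(m,n)]$ is the bounding box of $\MP$. Since $K[\MP]$ is Cohen--Macaulay, it is Gorenstein exactly when its canonical module $\omega$ is a cyclic module. By the Danilov--Stanley description, $\omega$ is the $K$-span of the monomials $t^a$ with $a\in\relint(C)\cap H$, where $C=\RR_{\geq 0}H$. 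As $\MP$ is connected, the bipartite graph $G_\MP$ is connected, so $\ZZ H=\{(\alpha;\beta)\in\ZZ^m\times\ZZ^n:\sum_i\alpha_i=\sum_j\beta_j\}$, and $H=C\cap\ZZ H$ by normality. Writing $C$ as the intersection of the half-spaces $\langle\sigma,x\rangle\geq 0$ over its primitive facet normals $\sigma$ (inside the hyperplane $\RR\cdot\ZZ H$), one sees that $\omega$ is cyclic if and only if $\relint(C)\cap\ZZ H$ is a single translate $x_0+H$, and this holds precisely when there is a (necessarily unique) $x_0\in\ZZ H$ with $\langle\sigma,x_0\rangle=1$ for every facet normal $\sigma$ of $C$. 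So the whole problem reduces to locating the facets of $C$ and solving this one linear system.

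To read off the facets I would use that $(\alpha;\beta)\in C$ if and only if there is a nonnegative array $(\lambda_{ij})_{(i,j)\in V(\MP)}$ with row sums $\alpha$ and column sums $\beta$; by the feasibility criterion for transportation problems this means $\alpha,\beta\geq 0$, $\sum_i\alpha_i=\sum_j\beta_j$, and $\sum_{i\in R}\alpha_i\leq\sum_{j\in N(R)}\beta_j$ for every $R\subseteq[m]$, where $N(R)$ is the set of indices $j$ with $(i,j)\in V(\MP)$ for some $i\in R$. Here the stack hypothesis is what makes this manageable: since $\MP$ is a convex stack polyomino, each column of $V(\MP)$ is an initial interval $\{1,\dots,d_i\}$ and the sequence $(d_i)$ is unimodal, so $N(R)$ is always an initial interval $\{1,\dots,q\}$; hence the only possibly nonredundant inequalities of the last kind are, one for each height $q$ attained, $\sum_{i:\,d_i\leq q}\alpha_i\leq\sum_{j\leq q}\beta_j$ (with $q=n$ giving back the equality $\sum\alpha=\sum\beta$). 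One then has to pin down which of these, together with which of the coordinate inequalities $\alpha_i\geq 0$, $\beta_j\geq 0$, are actually facets: a coordinate inequality is a facet precisely when deleting the corresponding vertex keeps $G_\MP$ connected, and an inequality of the other kind is a facet precisely when $q$ is a ``corner'' of the histogram. The upshot is that the facet data of $C$ is organized by the successive storeys of $\MP$, that is, by the subpolyominoes $\MP_{k_0},\dots,\MP_{k_r}$.

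Finally I would solve the system $\langle\sigma,x_0\rangle=1$ over all facet normals $\sigma$. Each coordinate facet forces the corresponding entry of $x_0$ to be $1$, and the facet attached to the storey $\MP_{k_i}$ contributes an equation $\sum_{j\leq q_i}\beta_j-\sum_{i':\,d_{i'}\leq q_i}\alpha_{i'}=1$, with $q_i$ the height of that storey. Substituting the pinned entries and subtracting these storey-equations in consecutive pairs, the $\beta$-side telescopes into the number of rows lying strictly between two consecutive storeys, namely $\height(\MP_{k_{i-1}})-\height(\MP_{k_i})$, and the $\alpha$-side into the number of columns that disappear between them, namely $\width(\MP_{k_{i-1}})-\width(\MP_{k_i})$. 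Descending from the top storey, one reads off that the system is solvable in $\ZZ H$ if and only if $\width(\MP_{k_i})=\height(\MP_{k_i})$ for every $i=0,\dots,r$; and when these equalities hold, $x_0$ can be exhibited directly as the monomial obtained by multiplying together the generators $s_jt_j$ along a longest staircase of cells of $\MP$. The base case $r=0$ is the rectangle: then $G_\MP$ is complete bipartite and $K[\MP]$ is the Segre product of $\poly{K}{s}{m}$ with $\poly{K}{t}{n}$; here there are no storey-equations, every coordinate inequality is a facet, the system collapses to $\alpha_i=1=\beta_j$ for all $i,j$, and consistency with $\sum\alpha=\sum\beta$ is exactly $m=n$, i.e.\ $\width(\MP)=\height(\MP)$ --- the classical Gorenstein criterion for Segre products.

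The conceptual steps above are standard; the real work is the second one. Pinning down precisely which coordinate inequalities and which transportation inequalities are facets of $C$ requires a careful inspection of the histogram of $\MP$ --- in particular of its flat portions and of whether some column uniquely realizes an extreme height --- and this has to be done cleanly enough that the telescoping in the third step outputs exactly the combinatorial quantities $\width(\MP_{k_i})$ and $\height(\MP_{k_i})$; one must also verify that the solution $x_0$ can be chosen in $\ZZ H$ rather than merely in $\RR\cdot\ZZ H$. One way to keep this under control is to induct on $r$: since $\MP_{k_1}$ is an induced subpolyomino of $\MP$ (Corollary~\ref{inducedsubpolyomino}) and its cone is a face of $C$ (cut out by the coordinate hyperplanes of the bottom storey), one can hope to reduce the facet bookkeeping for $\MP$ to that of $\MP_{k_1}$ together with the single storey that has been peeled off, eventually landing on the rectangle base case.
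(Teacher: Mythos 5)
You should first be aware that the paper does not prove Theorem~\ref{gorensteinstack} at all: it is Qureshi's result, quoted from \cite{Q} and used as a black box in the proof of Theorem~\ref{stack}, so there is no internal proof to compare against. That said, your strategy is the natural toric one and is of the same kind as the argument in \cite{Q}: by Theorem~\ref{ayesha} the ring $K[\MP]=K[H]$ is a normal Cohen--Macaulay domain, the Danilov--Stanley theorem describes $\omega$ as the span of the lattice points of $\relint(C)$, and Gorensteinness is equivalent to the existence of $x_0\in \ZZ H$ with $\langle\sigma,x_0\rangle=1$ for every primitive facet normal $\sigma$; the description of $C$ by the Gale/transportation inequalities $\sum_{i\in R}\alpha_i\leq\sum_{j\in N(R)}\beta_j$, which for a stack polyomino collapse to one inequality per attained column height, is also correct. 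So the approach is sound and on target.

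The genuine gap is that, as written, this is a programme rather than a proof, and the part you yourself flag as ``the real work'' is exactly the part that carries the theorem. Concretely: (i) the assertion that a coordinate inequality is a facet of the edge cone precisely when deleting the corresponding vertex of $G_\MP$ keeps the graph connected is a nontrivial statement (of Valencia--Villarreal/Ohsugi--Hibi type) that is not proved here, and it needs checking in the boundary situations relevant to a stack polyomino (the bottom-row vertex $t_1$, the leftmost and rightmost column vertices); (ii) the assertion that the inequality at height $q$ is a facet exactly when $q$ is a ``corner'' of the histogram is also unproved, and it is precisely this identification that makes the storeys $\MP_{k_0},\dots,\MP_{k_r}$ of the statement appear --- without it the telescoping in your third step has nothing to telescope over; (iii) the solution of the system $\langle\sigma,x_0\rangle=1$ is only described: the telescoping identities producing $\width(\MP_{k_{i-1}})-\width(\MP_{k_i})$ and $\height(\MP_{k_{i-1}})-\height(\MP_{k_i})$, the fact that solvability forces $\width(\MP_{k_i})=\height(\MP_{k_i})$ for every $i$ (and not merely some aggregate identity), the integrality of $x_0$ in $\ZZ H$, and the converse verification that the ``staircase'' monomial really satisfies $\langle\sigma,x_0\rangle=1$ on \emph{all} facets are all left unverified. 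Until (i)--(iii) are carried out, neither implication of the equivalence is established; so either complete the facet analysis in detail or, as the paper does, cite \cite{Q} for this theorem.
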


According to this theorem, the convex stack polyomino displayed in Figure~\ref{stackpolyomino} is not Gorenstein, because $\width(\MP_{k_0})=5$ and $\height(\MP_{k_0})=4$. An example of a Gorenstein stack polyomino is shown in Figure~\ref{gorenstein}.

\begin{figure}[hbt]
\begin{center}
\psset{unit=0.9cm}
\begin{pspicture}(0.5,-0.5)(0.5,4)
\rput(-6.5,0){
\pspolygon[style=fyp,fillcolor=light](5,0)(5,1)(6,1)(6,0)
\pspolygon[style=fyp,fillcolor=light](6,0)(6,1)(7,1)(7,0)
\pspolygon[style=fyp,fillcolor=light](7,0)(7,1)(8,1)(8,0)
\pspolygon[style=fyp,fillcolor=light](8,0)(8,1)(9,1)(9,0)
\pspolygon[style=fyp,fillcolor=light](5,1)(5,2)(6,2)(6,1)
\pspolygon[style=fyp,fillcolor=light](6,1)(6,2)(7,2)(7,1)
\pspolygon[style=fyp,fillcolor=light](7,1)(7,2)(8,2)(8,1)
\pspolygon[style=fyp,fillcolor=light](5,2)(5,3)(6,3)(6,2)
\pspolygon[style=fyp,fillcolor=light](6,2)(6,3)(7,3)(7,2)
\pspolygon[style=fyp,fillcolor=light](6,3)(6,4)(7,4)(7,3)
}
\end{pspicture}
\end{center}
\caption{A Gorenstein stack polyomino}\label{gorenstein}
\end{figure}
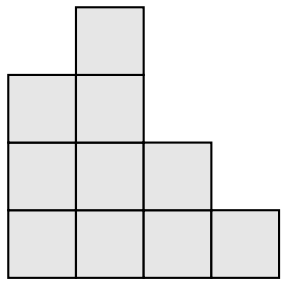

Combining Theorem~\ref{gorensteinstack} with the results of Section 2, we obtain

\begin{Theorem}
\label{stack}
Let $I_\MP$ be  convex stack polyomino. Then $I_\MP$ is extremal Gorenstein if and only if $\MP$ is isomorphic to one of the polyominoes in
Figure~\ref{extremalstack}.

\begin{figure}[hbt]
\begin{center}
\psset{unit=0.9cm}
\begin{pspicture}(4.5,-1)(4.5,3.5)
{
\pspolygon[style=fyp,fillcolor=light](0.8,-1)(0.8,0)(1.8,0)(1.8,-1)
\pspolygon[style=fyp,fillcolor=light](1.8,0)(1.8,1)(2.8,1)(2.8,0)
\pspolygon[style=fyp,fillcolor=light](1.8,-1)(1.8,0)(2.8,0)(2.8,-1)

\pspolygon[style=fyp,fillcolor=light](6.8,-1)(6.8,0)(7.8,0)(7.8,-1)
\pspolygon[style=fyp,fillcolor=light](7.8,0)(7.8,1)(8.8,1)(8.8,0)
\pspolygon[style=fyp,fillcolor=light](7.8,-1)(7.8,0)(8.8,0)(8.8,-1)
\pspolygon[style=fyp,fillcolor=light](6.8,0)(6.8,1)(7.8,1)(7.8,0)
}
\end{pspicture}
\end{center}
\caption{Extremal convex stack polyominoes}\label{extremalstack}
\end{figure}
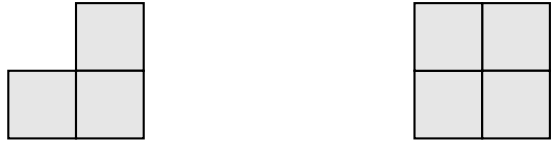
\end{Theorem}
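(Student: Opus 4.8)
The plan is to combine the two reformulations of ``extremal Gorenstein'' recalled above---for a convex polyomino $\MP$, the ideal $I_\MP$ is extremal Gorenstein if and only if $K[\MP]$ is Gorenstein and $\reg K[\MP]=2$, equivalently if and only if $K[\MP]$ is Cohen--Macaulay with $h$-vector $(1,q,1,0,\dots)$ for some integer $q>1$---with three ingredients: Qureshi's characterisation of the Gorenstein convex stack polyominoes (Theorem~\ref{gorensteinstack}), the monotonicity of graded Betti numbers along induced subpolyominoes (Corollary~\ref{inducedsubpolyomino}), and a finite amount of explicit computation on small polyominoes.

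\emph{The ``if'' direction.} Let $\MP$ be one of the two polyominoes of Figure~\ref{extremalstack}. In both cases $\MP$ is a convex stack polyomino satisfying condition (b) of Theorem~\ref{gorensteinstack} (for the four-cell square, $\width=\height=2$; for the three-cell polyomino, in addition $\width=\height=1$ after deleting the bottom row of cells), so $K[\MP]$ is Gorenstein; moreover in both cases $V(\MP)$ is a planar distributive lattice, so $I_\MP$ is its join-meet ideal. For the four-cell square, $I_\MP$ is the ideal of $2$-minors of a generic $3\times 3$ matrix and has $h$-vector $(1,4,1)$; for the three-cell polyomino, counting linear extensions of the underlying poset by descents gives $h$-vector $(1,3,1)$. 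In both cases $K[\MP]$ is Cohen--Macaulay with $h$-vector $(1,q,1)$, $q>1$, and $I_\MP$ is not principal, so $I_\MP$ is extremal Gorenstein.

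\emph{The ``only if'' direction.} Assume $I_\MP$ is extremal Gorenstein. Then $K[\MP]$ is Gorenstein, so Theorem~\ref{gorensteinstack} applies and forces, in particular, $\width(\MP)=\height(\MP)=:h$; moreover $\reg K[\MP]=2$, hence by Corollary~\ref{inducedsubpolyomino} every induced subpolyomino $\MP'$ of $\MP$ satisfies $\reg K[\MP']\le 2$. If $h=1$ then $\MP$ is a single cell and $I_\MP=0$, which is not admissible. If $h=2$ then $\MP$ has two columns whose heights form a unimodal vector with maximum $2$, and a direct inspection of the finitely many possibilities shows $\MP$ is isomorphic to one of the two polyominoes of Figure~\ref{extremalstack}. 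So it remains to rule out $h\ge 3$. The crux here is a combinatorial claim: every Gorenstein convex stack polyomino of height $\ge 3$ has an induced subpolyomino which is again a Gorenstein convex stack polyomino of height exactly $3$. The intended proof is an induction on $h$ that lowers the height by one at a time: if one of the two extreme columns of $\MP$ has height $1$, then deleting the bottom row of cells (an induced-subpolyomino operation) produces a Gorenstein convex stack polyomino of height $h-1$; if no extreme column has height $1$, then, using the unimodality of the column-height vector together with the equalities $\width(\MP_{k_i})=\height(\MP_{k_i})$, one deletes a carefully chosen vertex-column (and, if necessary, the top vertex-row) and checks that the resulting induced subpolyomino is still a Gorenstein convex stack polyomino of strictly smaller height. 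Granting this, a convex stack polyomino of height $3$ has exactly three columns with heights in $\{1,2,3\}$, so there are only finitely many Gorenstein ones; one verifies---by computing their $h$-vectors, e.g. via the squarefree divisor complexes of Proposition~\ref{bh}, or with a computer algebra system---that each has $\reg=3$ (for instance the $3\times 3$-cell square, $I_\MP=I_2$ of a generic $4\times 4$ matrix, has $h$-vector $(1,9,9,1)$). Hence $\MP$ would contain an induced subpolyomino $\MP'$ with $\reg K[\MP']=3$, contradicting $\reg K[\MP']\le 2$; therefore $h\le 2$, and the proof is complete.

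I expect the delicate point of the whole argument to be exactly this reduction to height $3$: in the case where every column of $\MP$ has height $\ge 2$---so that deleting the bottom row of cells destroys the Gorenstein property---one must argue, using unimodality of the column heights and the numerical conditions of Theorem~\ref{gorensteinstack}, that one can nonetheless delete a suitable row and column while preserving the ``Gorenstein convex stack'' property and lowering the height. Everything else---the equivalences for extremal Gorenstein ideals, the application of Corollary~\ref{inducedsubpolyomino}, and the verifications in heights $\le 3$---is either formal or a bounded finite computation.
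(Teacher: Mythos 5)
Your ``if'' direction and the general frame of the ``only if'' direction (extremal Gorenstein $\Rightarrow$ Gorenstein with $\reg K[\MP]=2$, Theorem~\ref{gorensteinstack} forcing $\width=\height$, and monotonicity of regularity along induced subcollections via Corollary~\ref{inducedsubpolyomino}) are sound. But the heart of your argument is the claim that every Gorenstein convex stack polyomino of height $h\geq 3$ contains an induced subpolyomino which is again a Gorenstein convex stack polyomino of height exactly $3$, and this is precisely what you do not prove. The easy half of your induction (an extreme column of height $1$, delete the bottom row) is fine, since the shelf conditions of Theorem~\ref{gorensteinstack} are inherited by $\MP_1$ when the width drops at level one. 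The other half --- ``delete a carefully chosen vertex-column and, if necessary, the top vertex-row'' --- is only an expectation, and it is genuinely delicate: for instance for column heights $(2,4,4,2)$ (Gorenstein of height $4$) deleting the top vertex-row together with an \emph{end} column produces heights $(3,3,2)$ or $(2,3,3)$, which are \emph{not} Gorenstein; one must instead delete an \emph{interior} vertex-column to land on $(2,3,2)$. So the choice depends nontrivially on the shelf data, and one must verify in general that (i) the induced vertex set is really the vertex set of the intended smaller stack polyomino (stray vertices left over from deleted short columns must be checked not to create new inner minors), and (ii) all of Qureshi's equalities $\width(\MP_{k_i})=\height(\MP_{k_i})$ survive the deletion. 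Without this lemma the reduction to height $3$ --- and hence your contradiction with $\reg\leq 2$ --- does not go through; in addition, the concluding assertion that \emph{every} Gorenstein convex stack polyomino of height $3$ has regularity $3$ is itself only asserted (the list is larger than the one example you compute, e.g.\ $(2,3,2)$, $(2,2,3)$, $(1,3,3)$, $(1,3,2)$, $(1,2,3)$ up to reflection), although that part is at least a bounded computation. A small further slip: for $h=1$ the ideal of a single cell is principal (one $2$-minor), not zero; the conclusion is unaffected.

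For comparison, the paper avoids any such structural induction. It first notes $m=n$ by Theorem~\ref{gorensteinstack}; if $V(\MP)$ is the full square, \cite[Theorem 4]{ERQ} gives $\reg I_\MP=n$, forcing $n=3$. If $V(\MP)$ is proper, it uses the fact that an extremal Gorenstein ideal is linearly related, so Corollary~\ref{corner} (together with the stack and Gorenstein structure) pins down the shape (top row a single cell, $[(2,1),(n-1,n-1)]\subset V(\MP)$); the induced full rectangle then has regularity $n-2$ by \cite{ERQ}, so Corollary~\ref{inducedsubpolyomino} gives $n\leq 5$, and the cases $n=4,5$ are eliminated by explicit Betti number/computer checks. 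In other words, the paper trades your unproved height-reduction lemma for the already established linear-relatedness machinery plus a known regularity formula; if you want to salvage your route, the cleanest fix is probably to replace ``induced Gorenstein subpolyomino of height $3$'' by ``induced full rectangle of large enough size'' and invoke \cite{ERQ}, which is essentially what the paper does.
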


\begin{proof}
It can be easily checked that $I_\MP$ is extremal Gorenstein, if $\MP$ is isomorphic to one of the two polyominoes shown in Figure~\ref{extremalstack}.

Conversely, assume that $I_\MP$ is extremal Gorenstein. Without loss of generality we  may assume that $[(1,1),(m,n)]$ is the smallest interval containing $V(\MP)$. Then Theorem~\ref{gorensteinstack} implies that $m=n$. Suppose first that $V(\MP)= [(1,1),(n,n)]$. Then, by  \cite[Theorem 4]{ERQ} of Ene, Rauf and Qureshi, it follows  that the regularity of $I_\MP$ is equal to $n$. Since $I_\MP$ is extremal Gorenstein, its regularity is equal to $3$. Thus $n=3$.

Next, assume that $V(\MP)$ is properly contained in $[(1,1),(n,n)]$. Since $I_\MP$ is linearly related, Corollary~\ref{corner} together with Theorem~\ref{gorensteinstack} imply that the top row of $\MP$ consists of only one cell and that $[(2,1),(n-1,n-1)]\subset V(\MP)$. Let $\MP'$ be  the polyomino induced by the rows $2,3,\ldots,n-1$ and the columns $1,2,\ldots, n-1$. Then $\MP'$ is the polyomino with  $V(\MP')=[(1,1),(n-2,n-1)]$. By applying again  \cite[Theorem~4]{ERQ} it follows that $\reg I_\MP'=n-2$. Corollary~\ref{inducedsubpolyomino} then implies that $\reg I_\MP\geq \reg I_{\MP'}= n-2$, and since $\reg I_\MP=3$ we deduce that $n\leq 5$. If $n=5$, then $I_\MP'$ is the ideal of $2$-minors of a $3\times 4$-matrix which has Betti numbers $\beta_{35}\neq 0$ and $\beta_{36}\neq 0$. Since $\MP'$ is an induced polyomino of $\MP$ and since $I_\MP$ is extremal Gorenstein, Corollary~\ref{inducedsubpolyomino} yields a contradiction.

Up  to isomorphism there exist for $n=4$ precisely the  Gorenstein polyominoes displayed  in Figure~\ref{width}. They are all not extremal Gorenstein as can be easily checked with CoCoA  or Singular. For $n=3$ any Gorenstein polyomino  is isomorphic to one of the two polyominoes shown in Figure~\ref{extremalstack}. This yields the desired conclusion.
\end{proof}

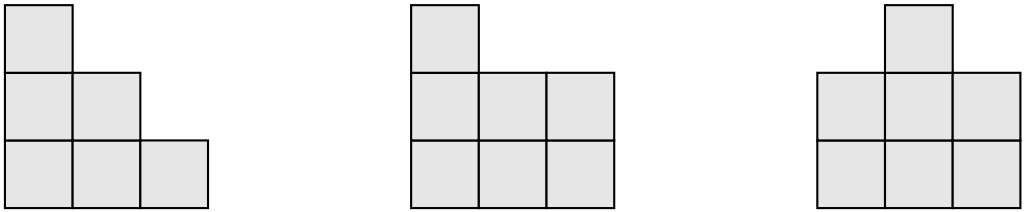
\begin{figure}[hbt]
\begin{center}
\psset{unit=0.9cm}
\begin{pspicture}(0.5,-0.5)(0.5,4)
\rput(-12,0){
\pspolygon[style=fyp,fillcolor=light](5,0)(5,1)(6,1)(6,0)
\pspolygon[style=fyp,fillcolor=light](6,0)(6,1)(7,1)(7,0)
\pspolygon[style=fyp,fillcolor=light](7,0)(7,1)(8,1)(8,0)
\pspolygon[style=fyp,fillcolor=light](5,1)(5,2)(6,2)(6,1)
\pspolygon[style=fyp,fillcolor=light](6,1)(6,2)(7,2)(7,1)
\pspolygon[style=fyp,fillcolor=light](5,2)(5,3)(6,3)(6,2)
}

\rput(-6,0){
\pspolygon[style=fyp,fillcolor=light](5,0)(5,1)(6,1)(6,0)
\pspolygon[style=fyp,fillcolor=light](6,0)(6,1)(7,1)(7,0)
\pspolygon[style=fyp,fillcolor=light](7,0)(7,1)(8,1)(8,0)
\pspolygon[style=fyp,fillcolor=light](5,1)(5,2)(6,2)(6,1)
\pspolygon[style=fyp,fillcolor=light](6,1)(6,2)(7,2)(7,1)
\pspolygon[style=fyp,fillcolor=light](7,1)(7,2)(8,2)(8,1)
\pspolygon[style=fyp,fillcolor=light](5,2)(5,3)(6,3)(6,2)
}

\rput(0,0){
\pspolygon[style=fyp,fillcolor=light](5,0)(5,1)(6,1)(6,0)
\pspolygon[style=fyp,fillcolor=light](6,0)(6,1)(7,1)(7,0)
\pspolygon[style=fyp,fillcolor=light](7,0)(7,1)(8,1)(8,0)
\pspolygon[style=fyp,fillcolor=light](5,1)(5,2)(6,2)(6,1)
\pspolygon[style=fyp,fillcolor=light](6,1)(6,2)(7,2)(7,1)
\pspolygon[style=fyp,fillcolor=light](7,1)(7,2)(8,2)(8,1)
\pspolygon[style=fyp,fillcolor=light](6,2)(6,3)(7,3)(7,2)
}

\end{pspicture}
\end{center}
\caption{Gorenstein  polyominoes of width 3}\label{width}
\end{figure}

The following theorem shows that besides of the two polyominoes listed in Theorem~\ref{stack}  whose polyomino ideal is extremal Gorenstein, there exist precisely two more
join-meet  ideals having this property.

\begin{Theorem}
\label{hibitwo}
Let $L = {\mathcal J}(P)$ be a simple finite distributive lattice.
Then the join-meet ideal $I_{L}$ is an extremal Gorenstein ideal
if and only if $L$ is one of the following displayed in Figure~\ref{Flattice}.

\begin{figure}[hbt]
\begin{center}
\psset{unit=0.3cm}
\begin{pspicture}(-25.3,-2.5)(4,5)
\rput(5,0){
\psline(-9,3)(-11,1)
\psline(-9,3)(-7,1)
\psline(-11,1)(-9,-1)
\psline(-7,1)(-9,-1)
\psline(-9,3)(-11,5)
\psline(-13,3)(-11,5)
\psline(-13,3)(-11,1)
\psline(-11,1)(-13,-1)
\psline(-13,-1)(-11,-3)
\psline(-11,-3)(-9,-1)

\rput(-9,3){$\bullet$}
\rput(-11,1){$\bullet$}
\rput(-7,1){$\bullet$}
\rput(-9,-1){$\bullet$}
\rput(-11,-3){$\bullet$}
\rput(-13,-1){$\bullet$}
\rput(-13,3){$\bullet$}
\rput(-11,5){$\bullet$}
}

\rput(-5,0){
\psline(-9,3)(-11,1)
\psline(-11,1)(-9,-1)
\psline(-9,3)(-11,5)
\psline(-13,3)(-11,5)
\psline(-13,3)(-11,1)
\psline(-11,1)(-13,-1)
\psline(-13,-1)(-11,-3)
\psline(-11,-3)(-9,-1)

\rput(-9,3){$\bullet$}
\rput(-11,1){$\bullet$}
\rput(-9,-1){$\bullet$}
\rput(-11,-3){$\bullet$}
\rput(-13,-1){$\bullet$}
\rput(-13,3){$\bullet$}
\rput(-11,5){$\bullet$}
}

\rput(18,0){
\psline(-9,3)(-11,1)
\psline(-9,3)(-7,1)
\psline(-11,1)(-9,-1)
\psline(-7,1)(-9,-1)
\psline(-9,3)(-11,5)
\psline(-13,3)(-11,5)
\psline(-13,3)(-11,1)
\psline(-11,1)(-13,-1)
\psline(-13,-1)(-11,-3)
\psline(-11,-3)(-9,-1)
\psline(-15,1)(-13,-1)
\psline(-15,1)(-13,3)

\rput(-9,3){$\bullet$}
\rput(-11,1){$\bullet$}
\rput(-15,1){$\bullet$}
\rput(-7,1){$\bullet$}
\rput(-9,-1){$\bullet$}
\rput(-11,-3){$\bullet$}
\rput(-13,-1){$\bullet$}
\rput(-13,3){$\bullet$}
\rput(-11,5){$\bullet$}
}

\rput(-16,0){
\rput(-11,-3){$\bullet$}
\rput(-13,-0.5){$\bullet$}
\rput(-9,-0.5){$\bullet$}
\rput(-11,-0.5){$\bullet$}
\rput(-13,2.5){$\bullet$}
\rput(-9,2.5){$\bullet$}
\rput(-11,2.5){$\bullet$}
\rput(-11,5){$\bullet$}
\psline(-11,-3)(-13,-0.5)
\psline(-11,-3)(-9,-0.5)
\psline(-11,-3)(-11,-0.5)
\psline(-11,5)(-11,2.5)
\psline(-9,-0.5)(-9,2.5)
\psline(-13,-0.5)(-13,2.5)
\psline(-13,-0.5)(-11,2.5)
\psline(-11,2.5)(-9,-0.5)
\psline(-11,-0.5)(-13,2.5)
\psline(-11,-0.5)(-9,2.5)
\psline(-11,5)(-13,2.5)
\psline(-11,5)(-9,2.5)

}
\end{pspicture}
\end{center}
\caption{}\label{Flattice}
\end{figure}
\end{Theorem}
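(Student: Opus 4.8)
The plan is to reduce Theorem~\ref{hibitwo} to a question about the poset $P$ of join-irreducibles of $L$, exactly in the spirit of the proof of Theorem~\ref{hibione}. As recalled just before the statement, $I_L$ is extremal Gorenstein if and only if $K[L]/I_L$ is Cohen--Macaulay (which it always is) and Gorenstein, and its $h$-vector is $(1,q,1,0,\dots,0)$ with $q>1$; and $K[L]/I_L$ is Gorenstein exactly when $P$ is pure, in which case the $h$-vector is automatically symmetric. Since $K[L]/I_L$ is Cohen--Macaulay, $\reg K[L]/I_L$ is the degree of its $h$-polynomial, which (through the descent description of the $h$-vector) equals $\max\{\,|D(\pi)| : \pi \text{ a linear extension of } P\,\}$; and this number equals $|P|-\ell(P)$, where $\ell(P)$ is the number of elements of a longest chain of $P$ (a standard fact on $P$-partitions; see also the regularity computation of \cite{ERQ}). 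Thus the first step is to record: \emph{$I_L$ is extremal Gorenstein if and only if $P$ is pure, $|P|=\ell(P)+2$, and $|L|-|P|-1>1$.}

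Granting this, the implication ``$L$ is one of the lattices in Figure~\ref{Flattice}'' $\Rightarrow$ ``$I_L$ extremal Gorenstein'' is a finite check: for two of the four lattices the poset $P$ is, respectively, the disjoint union of two $2$-element chains and the $4$-element poset with two minimal and two maximal elements joined by three covering relations, and the corresponding $L$ are precisely the two polyominoes of Figure~\ref{extremalstack}, so these cases are already contained in Theorem~\ref{stack}; for the other two one has $P$ a $3$-element antichain (so $L=\mathcal{J}(P)$ is the Boolean lattice on three atoms) and $P$ the poset with two minimal and two maximal elements and all four covering relations, and in both cases $P$ is pure and a short computation of the descent enumerator of $P$ (or the computer) gives $h(L)=(1,4,1)$, resp.\ $(1,2,1)$.

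For the converse, suppose $I_L$ is extremal Gorenstein, so by the reduction $P$ is pure with $|P|=\ell(P)+2$. Since $L$ is simple, $P$ has no element comparable to all others; in particular it has at least two minimal and at least two maximal elements. The crux is to show $\ell(P)\le 2$. Assuming $\ell(P)=\ell\ge 3$, fix a longest chain $x_1<x_2<\dots<x_\ell$; the two ``extra'' elements of $P$ each lie (by purity) on a chain of length $\ell$, which forces severe restrictions on how they can be placed relative to $x_1<\dots<x_\ell$, and one is led to an element of $P$ (e.g.\ $x_2$) comparable to every other element, contradicting simplicity. Hence $\ell(P)\in\{1,2\}$. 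If $\ell(P)=1$, then $P$ is a $3$-element antichain and $L$ is the Boolean lattice on three atoms. If $\ell(P)=2$, then $|P|=4$ and $P$ is a height-one poset, i.e.\ bipartite with minimal part $A$ and maximal part $B$; simplicity forces $|A|=|B|=2$ (a one-element part would again give an element comparable to all of $P$), and the bipartite comparability graph between $A$ and $B$, which has no isolated vertex, then has $2$, $3$, or $4$ edges, producing exactly the disjoint union of two $2$-chains, the three-relation poset, and the complete bipartite poset. These are the four posets from the first part, and in each $|L|-|P|-1\ge 2$, so no case is lost; this yields the four lattices of Figure~\ref{Flattice}.

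The main obstacle is the ``only if'' direction, concentrated in the claim $\ell(P)\le 2$: one must carefully rule out pure simple posets obtained by adding two points to a chain of length $\ge 3$, and the subtle configuration to exclude is the one in which an extra point sits strictly between $x_1$ and $x_3$ and is therefore not obviously comparable to $x_2$; here one has to exploit the position of the \emph{second} extra point together with purity to reach a contradiction with simplicity. Everything after that is a finite enumeration, large parts of which are already handled by Theorem~\ref{stack}.
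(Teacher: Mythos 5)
Your proposal is correct in substance, but it organizes the proof along genuinely different lines than the paper. The paper starts, as you do, from the criterion that $I_L$ is extremal Gorenstein iff $K[L]/I_L$ is Gorenstein with $h$-vector $(1,q,1,0,\ldots,0)$, $q>1$, but then argues by antichain size: no $4$-element clutter can occur; if $P$ contains a $3$-element clutter, purity and simplicity force $P$ to coincide with it (the Boolean lattice case); and if not, explicit linear extensions with at least three descents rule out chains with three or more elements, after which the three height-one posets of Figure~\ref{poset} are listed. You instead convert the degree condition into the numerical statement $|P|=\ell(P)+2$ via the identity $\max_\pi|D(\pi)|=|P|-\ell(P)$, and then bound the chain length structurally. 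This buys a transparent finiteness statement ($|P|\in\{3,4\}$ once $\ell(P)\le 2$) and replaces several ad hoc descent constructions by a single order-theoretic lemma; the cost is the degree formula itself, which you should prove or cite precisely rather than wave at: the upper bound $|D(\pi)|\le |P|-\ell(P)$ holds because between two consecutive elements of a longest chain, read along any linear extension, there must be at least one ascent, and the lower bound follows by listing the Mirsky levels $A_1,\ldots,A_{\ell}$ of $P$ (where $A_{i+1}$ consists of the minimal elements of $P\setminus(A_1\cup\cdots\cup A_i)$) in order, each level in decreasing label order, which is a linear extension with $\sum_i(|A_i|-1)=|P|-\ell(P)$ descents. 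If you prefer to quote \cite{ERQ}, check that their statement covers arbitrary finite distributive lattices; in this paper it is only applied to rectangles.

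The one place where you stop short is the claim $\ell(P)\le 2$, which you flag as the main obstacle but do not prove. It goes through, and more easily than you fear. If $\ell=\ell(P)\ge 3$, fix a maximal chain $x_1<\cdots<x_\ell$. Simplicity forces at least two minimal and at least two maximal elements of $P$, and all minimal (resp.\ maximal) elements other than $x_1$ (resp.\ $x_\ell$) lie among the two remaining points; since an element that is both minimal and maximal would be a one-element maximal chain, contradicting purity, the two extra points are exactly one further minimal element $y$ and one further maximal element $z$. In particular the configuration you worry about, an extra point strictly between $x_1$ and $x_3$, cannot occur at all. Now a maximal chain through $y$ has $\ell$ elements, its elements other than $y$ lie in the $\ell$-element set $\{x_2,\ldots,x_\ell,z\}$, and it cannot contain both of the incomparable maximal elements $x_\ell$ and $z$; hence it contains $x_2$, so $y<x_2$. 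Dually, a maximal chain through $z$ cannot contain both minimal elements $x_1$ and $y$, hence contains $x_{\ell-1}$, so $x_{\ell-1}<z$. Then $x_2\le x_{\ell-1}<z$, so $x_2$ is comparable to every element of $P$, contradicting simplicity. With this lemma in place, your enumeration for $\ell(P)\in\{1,2\}$ (the $3$-antichain, and the three bipartite posets on $2+2$ elements with $2$, $3$ or $4$ relations) and the verification of the four resulting lattices --- two of which are the polyominoes of Figure~\ref{extremalstack} already handled in Theorem~\ref{stack} --- completes the proof and agrees with Figure~\ref{Flattice}.
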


\begin{proof}
Suppose that $L = {\mathcal J}(P)$ is simple and that $K[L]/I_{L}$ is Gorenstein.
it then follows that $P$ is pure and there is no element
$\xi \in P$ for which every $\mu \in P$ satisfies either $\mu \leq \xi$ or
$\mu \geq \xi$.  Since $h(L) = (1, q, 1, 0, \ldots, 0)$,
no $4$-element clutter is contained in $P$.

Suppose that a three-element clutter
$A$ is contained in $P$.
If none of the elements belonging to $A$ is a minimal element of $P$,
then, since $L = {\mathcal J}(P)$ is simple,
there exist at least two minimal elements.
Hence there exists a linear extension $\pi$ of $P$ with $|D(\pi)| \geq 3$,
a contradiction.  Thus at least one of the elements belonging to $A$
is a minimal element of $P$.  Similarly, at least one of the elements
belonging to $A$ is a maximal element.
Let an element $x \in A$ which is both minimal and maximal.  Then, since $P$ is pure,
one has $P = A$.  Let $A = \{ \xi_{1}, \xi_{2}, \xi_{3} \}$
with $A \neq P$, where $\xi_{1}$ is a minimal element
and $\xi_{2}$ is a maximal element.
Let $\mu_{1}$ be a maximal element with $\xi_{1} < \mu_{1}$ and
$\mu_{2}$ a minimal element with $\mu_{2} < \xi_{2}$.
Then neither $\mu_{1}$ nor $\mu_{2}$ belongs to $A$.
If $\xi_{3}$ is either minimal or maximal, then
there exists a linear extension $\pi$ of $P$ with $|D(\pi)| \geq 3$,
a contradiction.  Hence $\xi_{3}$ can be neither minimal nor maximal.
Then since $P$ is pure, there exist $\nu_{1}$ with
$\xi_{1} < \nu_{1} < \mu_{1}$ and $\nu_{2}$ with
$\mu_{2} < \nu_{2} < \xi_{2}$ such that $\{ \nu_{1}, \nu_{2}, \xi_{3}\}$
is a three-element clutter.
Hence there exists a linear extension $\pi$ of $P$ with $|D(\pi)| \geq 4$,
a contradiction.
Consequently, if $P$ contains a three-element clutter
$A$, then $P$ must coincide with $A$.
Moreover, if $P$ is a three-element clutter, then
$h(L) = (1,4,1)$ and $I_{L}$ is an extremal Gorenstein ideal.

Now, suppose that $P$ contains no clutter $A$ with $|A| \geq 3$.
Let a chain $C$ with $|C| \geq 3$ be contained in $P$.
Let $\xi, \, \xi'$ be the minimal elements of $P$ and
$\mu, \, \mu'$ the maximal elements of $P$
with $\xi < \mu$ and $\xi' < \mu'$.
Since $L = {\mathcal J}(P)$ is simple and since $P$ is pure,
it follows that there exist maximal chains
$\xi < \nu_{1} < \cdots < \nu_{r} < \mu$ and
$\xi' < \nu'_{1} < \cdots < \nu'_{r}  < \mu'$ such that
$\nu_{i} \neq \nu'_{i}$ for $1 \leq i \leq r$.
Then one has a linear extension $\pi$ of $P$ with $D(\pi) = 2 + r \geq 3$,
a contradiction.  Hence the cardinality of all maximal chains of $P$ is at most $2$.
However, if the cardinality of all maximal chains of $P$ is equal to $1$, then
$h(L) = (1,1)$.  Thus $I_{L}$ cannot be an extremal Gorenstein ideal.
If the cardinality of all maximal chains of $P$ is equal to $2$, then
$P$ is the  posets displayed in Figure~\ref{poset}. For each of them the join-meet ideal $I_{L}$ is an extremal Gorenstein ideal.

\begin{figure}[hbt]
\begin{center}
\psset{unit=0.3cm}
\begin{pspicture}(-25.3,-2.5)(4,5)
\rput(7,0){
\rput(-1,4){$\bullet$}
\rput(-4,4){$\bullet$}
\rput(-1,0){$\bullet$}
\rput(-4,0){$\bullet$}
\rput(-2.5,-2.5){$h(L) = (1,4,1)$}
\psline(-1,0)(-1,4)
\psline(-4,0)(-4,4)
}

\rput(-8,0){
\rput(-1,4){$\bullet$}
\rput(-4,4){$\bullet$}
\rput(-1,0){$\bullet$}
\rput(-4,0){$\bullet$}
\rput(-2.5,-2.5){$h(L) = (1,3,1)$}
\psline(-1,0)(-1,4)
\psline(-4,0)(-4,4)
\psline(-1,0)(-4,4)
}

\rput(-23,0){
\rput(-1,4){$\bullet$}
\rput(-4,4){$\bullet$}
\rput(-1,0){$\bullet$}
\rput(-4,0){$\bullet$}
\rput(-2.5,-2.5){$h(L) = (1,2,1)$}
\psline(-1,0)(-1,4)
\psline(-4,0)(-4,4)
\psline(-1,0)(-4,4)
\psline(-4,0)(-1,4)
}
\end{pspicture}
\end{center}
\caption{}\label{poset}
\end{figure}
\end{proof}

\end{document}